\numberwithin{equation}{section}
\theoremstyle{plain}
\newtheorem{thm}{Theorem}
\newtheorem{conj}{Conjecture}
\newtheorem{cor}{Corollary}
\newtheorem*{comm}{Comment} 
\theoremstyle{definition}
\newcommand\md{\textrm{\upshape mod }}
\newcommand{\ph}{\varphi}
\newcommand{\Oh}{\mathcal{O}}
\newcommand{\Z}{\mathbb{Z}}
\newcommand{\C}{\mathbb{C}}
\newcommand{\Q}{\mathbb{Q}}
\newcommand{\K}{\mathbb{K}}
\newcommand{\setE}{\mathcal{E}}
\newcommand{\e}{\mathrm{e}}
\newcommand{\li}{\operatorname{li}}
\newcommand{\Eexpl}{E_{\mathrm{expl}}}
\newcommand{\Ss}{\mathcal{S}}
\begin{document}
\title[ Conditional Bounds on Siegel Zeros ]
{Conditional Bounds on Siegel Zeros } 
\author{Gautami Bhowmik}
\address{G. Bhowmik: Laboratoire Paul Painlev{\'e}, Labex-CEMPI, Universit{\'e} de Lille, 59655
Villeneuve d'Ascq Cedex, France}
\email{gautami.bhowmik@univ-lille.fr}

\author{Karin Halupczok}
\address{K. Halupczok: Mathematisches Institut der
Heinrich-Heine-Universit{\"a}t D{\"u}sseldorf, Universit{\"a}tsstr.~1,
40225 D{\"u}sseldorf, Germany}
\email{karin.halupczok@uni-duesseldorf.de}

\keywords{Siegel zero, Goldbach problem, congruences, Dirichlet $L$-function, Generalised Riemann hypothesis}
\subjclass[2010]{11P32, 11M26, 11M41}
\thanks{We thank Andrew Granville and Lasse Grimmelt for helpful comments and the referee for
   an improved presentation.}
\date{}
\maketitle
\hfill {\it Dedicated to Melvyn Nathanson.}
\begin{abstract}
We present an overview of bounds on zeros of $L$-functions and
obtain some 
improvements under weak conjectures
related to the Goldbach problem.
\end{abstract}
\maketitle

\section{Introduction }\label{Intro}  
The  existence of  non-trivial real zeros of a Dirichlet $L$-function
would contradict the Generalised Riemann Hypothesis.  One possible counter-example, called the Landau--Siegel zero,
 is  real and simple
and  the region in which it could eventually exist 
is important to determine.
In 1936 Siegel gave a quantitative estimate on the distance of  an exceptional zero 
from the line $\Re s=1$.
The splitting into cases depending on whether such an
exceptional zero exists or not happens to be an important technique often used in analytic number theory,
for example in the theorem of Linnik.
In the first section we discuss properties of the Siegel zero and 
results assuming classical and more recent hypotheses. This part of the paper is expository.

In the second section we present a conditional bound. In 2016 Fei  improved Siegel's bound for certain moduli
under a weakened Hardy--Littlewood  conjecture  on the Goldbach problem of representing 
an even number as the sum of two primes.  In Theorem~\ref{thm:bettersiegel} 
and Corollary~\ref{cor:cor1}
we further weaken this conjecture and 
enlarge the
set of moduli 
to include more Dirichlet characters.

\section{Background} 
Consider a completely multiplicative, periodic  arithmetic function $\chi : \Z \rightarrow \C$ where for $q \ge  1$ there exists a group homomorphism
$\tilde\chi : (\Z/q\Z)^{\times} \rightarrow \C^{\times}$
such that $\chi(n) = \tilde\chi (n(\md q))$ for $n$ coprime to $q$ and $\chi(n) = 0$ if not.
We call  $\chi$ a Dirichlet character $(\md q)$. In fact, if $(n,q)=1, \chi(n)$ is a $\phi(q)^{th}$ complex root of unity.
We denote the
{\it principal} character $\md q$, whose value 
$\chi(n)$ 
is always 1 
for $n$ coprime to $q$, 
by $\chi_0\ (\md q)$. 
The {\it order} of  $\chi$ 
is the least positive integer $n$ such that $\chi^n=\chi_0$, both characters having the same modulus.
A non-principal character is called {\it quadratic} if  $\chi^{2}=\chi_{0}$.
In the case where $\chi$ 
always takes a real value, the possibilities being only 0 or $\pm 1$, it is called a {\it real} character, 
otherwise it is called {\it complex}.
A character modulo $q$ is termed {\it primitive}  and $q$ its {\it conductor} if it cannot be factored as ${\chi = \chi' \chi_0},$ where ${\chi_0}$ is a principal character and ${\chi'}$  a character of modulus strictly less than $q$.
For a given
$\chi\ (\md q)$, there is a unique primitive character $\tilde\chi (\md \tilde{q})$ with least possible $\tilde{q}$, where $\tilde{q}\mid q$,
that {\it induces} $\chi$,
such that $\chi$ and $\tilde\chi$ have the same value at all $n$ coprime to $q$. 

The $L$ series were introduced in 1837 by Dirichlet who used them to prove an analytic formula for the class number 
and  the infinitude of primes in any  arithmetic progression.
For $s=\sigma +it$, $\sigma >1$, and a Dirichlet character $\chi$, we consider the Dirichlet $L$-function
\[
  L(s,\chi) =  \sum_{n=1}^\infty
\frac{\chi(n)}{n^s}=\prod_{p\  \text {prime}}\Big(1-\frac{\chi(p)}{p^s}\Big)^{-1}.
\] 
Note that since $L(s, \chi) = L(s,\tilde\chi) \prod_{p\mid q}  (1 -\tilde\chi(p)p^{-s} )$ 
there is no loss in considering only primitive characters for obtaining analytic properties.

Let $\rho_{\chi}=\beta_\chi +i\gamma_\chi$ be the non-trivial
zeros of the $L$-function.  These are known to be contained in the strip $0<\Re(s)<1$ though
according to the Generalised Riemann Hypothesis (GRH),  the only possible value of $\beta_\chi$ is $1/2$.
While the GRH
remains out of reach, much unconditional work has been directed towards finding zero-free regions  for $L$-functions.

Around hundred years ago it was proved
that real zeros close to $\Re s=1$ are indeed rare. More precisely,

\begin{thm}[Landau--Page]  \label{thm:page}
There is an absolute constant $c > 0$ such that for any $Q,T\ge 2$,
the product  $ \prod_{q\le Q} \prod_{\chi \mod q} ^{\star}L(s,\chi)$ has at most one zero of an $L$-function in the region
\[
   |t|\leq T,\ 1-\sigma\le \frac{c}{\log (QT)};
\]
where $^\star$ runs over all primitive real characters of modulus $q$. If such a zero exists, 
then it is real and associated to a unique, quadratic $\chi \mod q$.
\end{thm}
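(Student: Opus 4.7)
The plan is to prove the theorem in two stages: first, to show that any zero of some $L(s,\chi)$ in the prescribed rectangle must be real, simple, and associated with a real (quadratic) primitive character $\chi$; second, to show that across all primitive real characters of modulus at most $Q$, at most one such exceptional zero can occur.

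For the first stage, I would apply the classical positivity $3 + 4\cos\theta + \cos 2\theta \geq 0$ to the logarithmic derivatives to obtain
\[
-3\,\Re\frac{\zeta'}{\zeta}(\sigma) - 4\,\Re\frac{L'}{L}(\sigma + it,\chi) - \Re\frac{L'}{L}(\sigma + 2it,\chi^2) \geq 0
\]
for $\sigma > 1$. Expanding each logarithmic derivative via the Hadamard product, specialising to $\sigma = 1 + \delta$ with $\delta$ of order $1/\log(qT)$, retaining only the contribution of a fixed zero $\rho = \beta + i\gamma$ of $L(s,\chi)$, and controlling the remaining terms through the simple pole of $\zeta$ and the standard partial-fraction bounds on log-derivatives of $L$-functions, I would force $1 - \beta \gg 1/\log(qT)$ unless simultaneously $\gamma = 0$ and $\chi^2 = \chi_0$. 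This identifies the only possible exception as a real zero of a quadratic (real) character. Simplicity for a fixed such $\chi$ follows by applying the same argument to two hypothetical close real zeros.

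For the second stage, suppose two distinct primitive real characters $\chi_1,\chi_2$ of moduli $q_1,q_2 \leq Q$ both possess real zeros $\beta_1,\beta_2$ in the region. The key auxiliary function is
\[
F(s) := \zeta(s)\,L(s,\chi_1)\,L(s,\chi_2)\,L(s,\chi_1\chi_2),
\]
where $\chi_1\chi_2$ is a non-principal real character, the four characters $\chi_0,\chi_1,\chi_2,\chi_1\chi_2$ forming a Klein four-group of quadratic characters. Classically $F(s)$ coincides with the Dedekind zeta function of a biquadratic field, so its Dirichlet series has non-negative coefficients, and hence so does that of $-F'/F$. From the Hadamard partial fraction expansion, for real $s>1$ one has
\[
0 \le -\frac{F'}{F}(s) = \frac{1}{s-1} - \sum_\rho \frac{1}{s-\rho} + O(\log(QT)),
\]
and since every summand $\Re(1/(s-\rho))$ is positive for such $s$, discarding all zeros except $\beta_1,\beta_2$ yields
\[
\frac{1}{s-\beta_1} + \frac{1}{s-\beta_2} \le \frac{1}{s-1} + O(\log(QT)).
\]
Setting $s = 1 + 2\max(1-\beta_1,1-\beta_2)$ and simplifying forces $\max(1-\beta_1,1-\beta_2) \gg 1/\log(QT)$, contradicting the hypothesis once $c$ is chosen small enough.

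The main technical obstacle is ensuring that the $O(\log(QT))$ error terms are absolute and uniform: the conductor of $\chi_1\chi_2$ can be as large as $Q^2$, but $\log(Q^2T) \ll \log(QT)$ up to constants; the density of non-exceptional zeros must be handled by the standard bounds $\sum_\rho 1/(1+\gamma^2) \ll \log q$ and $N(T,\chi) \ll T\log(qT)$. These must be assembled with absolute implied constants so that a single $c>0$ works uniformly for the entire product $\prod_{q\le Q}\prod_{\chi}^{\star} L(s,\chi)$.
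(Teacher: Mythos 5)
The paper states this Landau--Page theorem as classical background in its expository first section and supplies no proof of its own, so there is nothing to compare against directly; I can only evaluate your sketch on its own merits. What you have written is the standard textbook argument and it is correct in outline. The first stage (de la Vall\'ee Poussin's $3+4\cos\theta+\cos2\theta\ge0$ trick applied to $\zeta$, $L(\cdot,\chi)$, $L(\cdot,\chi^2)$) correctly isolates the bad case to $\chi$ real with $\gamma$ tiny; note that for real $\chi$ a non-real zero comes paired with its conjugate, so the subsequent ``two close zeros'' argument also forces $\gamma=0$ rather than merely $\gamma$ small, and simplicity comes from the same two-zero estimate applied to $\zeta(s)L(s,\chi)$. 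The second stage is Landau's argument, and the crucial non-negativity of the Dirichlet coefficients of $-F'/F$ for $F=\zeta\,L(\chi_1)L(\chi_2)L(\chi_1\chi_2)$ can be seen directly from the Euler product, since $(1+\chi_1(p^k))(1+\chi_2(p^k))\ge0$ for real characters; this avoids any worry about whether $\chi_1\chi_2$ is primitive and whether $F$ is literally a Dedekind zeta (the extraneous Euler factors at primes dividing $q_1q_2$ are harmless but strictly speaking prevent an exact identification). With $s=1+2\max(1-\beta_1,1-\beta_2)$ the numerics $\tfrac1{s-\beta_1}+\tfrac1{s-\beta_2}\ge\tfrac2{3\eta}>\tfrac1{2\eta}=\tfrac1{s-1}$ give the contradiction as you say, and the conductor of $\chi_1\chi_2$ being at most $Q^2$ keeps all error terms $O(\log(QT))$ with absolute constants. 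In short: correct sketch of the classical proof; the paper itself offers none.
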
 
This eventual `bad' zero  contradicting the GRH  is called  the exceptional or Siegel or
{\it Landau--Siegel zero} and the corresponding character is called the
{\it exceptional} character.  We denote the Landau--Siegel zero by $\beta_{\chi}$ or
simply $\beta$. 

The work of Landau and Siegel  provide bounds on  the proximity of such a zero on the real axis from $s=1$.
Quantitatively,

\begin{thm}[Siegel]\label{thm:Siegel35}
For an exceptional zero $\beta$ associated to a primitive character $\chi$ of conductor $q$ and any $\epsilon>0$ there is a constant $c(\epsilon) > 0$ such that 

\begin{equation}
  1-\beta \geq \frac{c(\epsilon)}{q^{\epsilon}}.
\end{equation}
\end{thm}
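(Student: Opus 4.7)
The plan is to reduce the statement to Siegel's lower bound on $L(1,\chi)$ for primitive quadratic $\chi$, and then to establish that bound via a dichotomy on the existence of an auxiliary ``bad'' character. For the reduction, if $\beta$ is a real zero of $L(s,\chi)$ close to $1$, the mean value theorem applied on $[\beta,1]$ gives $L(1,\chi)=(1-\beta)L'(\xi,\chi)$ for some $\xi\in(\beta,1)$; a standard estimate $|L'(\sigma,\chi)|\ll\log^2 q$ uniformly for $\sigma$ near $1$ then yields $1-\beta\gg L(1,\chi)/\log^2 q$. Since $\log^2 q=o(q^{\eta})$ for any $\eta>0$, it suffices to prove that for every $\epsilon>0$ there is $c(\epsilon)>0$ with $L(1,\chi)\geq c(\epsilon)q^{-\epsilon}$ for all primitive quadratic $\chi$ of conductor $q$.

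To obtain this lower bound on $L(1,\chi_2)$ for a given primitive quadratic $\chi_2$ mod $q_2$, I would introduce an auxiliary distinct primitive quadratic character $\chi_1$ mod $q_1$ and form
$$F(s)=\zeta(s)L(s,\chi_1)L(s,\chi_2)L(s,\chi_1\chi_2).$$
Analysing the Euler product prime by prime, using $\chi_j(p)\in\{-1,0,1\}$, shows that $F(s)=\sum_n a_n n^{-s}$ with $a_n\geq 0$ and $a_1=1$, and $F$ is meromorphic on $\C$ with a single simple pole at $s=1$ of residue $\kappa=L(1,\chi_1)L(1,\chi_2)L(1,\chi_1\chi_2)$. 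A Landau-type argument, exploiting the non-negativity of the $a_n$, truncating the Dirichlet series for $F$, and using the vanishing $F(\beta_1)=0$ at a real zero $\beta_1$ of $L(s,\chi_1)$ together with crude bounds on $L$-values at $s=2$, then produces an inequality of the shape
$$L(1,\chi_2)\;\geq\;c\,(1-\beta_1)\,L(1,\chi_1)\,(q_1q_2)^{-A(1-\beta_1)}$$
for absolute constants $c,A>0$.

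The dichotomy is the crux. Fix $\epsilon>0$ and set $\delta=\epsilon/(2A)$. Either there exists some primitive quadratic character $\chi_1$ mod $q_1$ with a real zero $\beta_1\geq 1-\delta$, in which case I fix any one such $\chi_1$ once and for all; applied to every other primitive quadratic $\chi_2$, the displayed inequality (with $q_1$, $\beta_1$, $L(1,\chi_1)$ all absorbed into an $\epsilon$-dependent constant) yields $L(1,\chi_2)\gg_\epsilon q_2^{-\epsilon}$. Or no such $\chi_1$ exists, in which case every primitive quadratic character already satisfies $1-\beta\geq\delta$, strictly stronger than the claim. The main obstacle is the \emph{ineffectivity} of this dichotomy: without being able to decide which case occurs, or in the first case to exhibit an explicit witness $\chi_1$, the constant $c(\epsilon)$ cannot be extracted from the argument. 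This is the celebrated ineffectivity of Siegel's theorem, which the expository and conditional parts of the paper aim to circumvent under additional hypotheses.
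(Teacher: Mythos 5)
The paper states Theorem~\ref{thm:Siegel35} as classical background and gives no proof of its own, so there is no in-paper argument to compare against. Your sketch is the standard Landau--Estermann proof of Siegel's theorem and is essentially correct: the mean-value reduction $1-\beta \gg L(1,\chi)/\log^2 q$ is standard; the auxiliary series $F(s)=\zeta(s)L(s,\chi_1)L(s,\chi_2)L(s,\chi_1\chi_2)$ does have nonnegative Dirichlet coefficients by the prime-by-prime case analysis on $(\chi_1(p),\chi_2(p))\in\{0,\pm1\}^2$; and a Landau-type argument using $F(\beta_1)=0$, the Taylor expansion of $F$ about $s=2$, and crude bounds on $L$-values does yield an inequality of the displayed shape. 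Two details deserve to be made explicit. First, the construction of $F$ needs $\chi_1\neq\chi_2$ so that $\chi_1\chi_2$ is non-principal and $F$ has a single pole; you account for this by applying the bound only to ``every other'' $\chi_2$, and the one excluded character is a fixed finite exception that can be absorbed into the constant. Second, in the first branch of the dichotomy you implicitly use $\beta_1<1$ and $L(1,\chi_1)>0$ (Dirichlet's nonvanishing at $s=1$) so that the fixed prefactors are genuinely positive, if ineffective. Your closing observation on the ineffectivity of the constant $c(\epsilon)$ is exactly the caveat the paper records immediately after the statement, and it is this ineffectivity that motivates the conditional improvements proved later in the paper.
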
 

Unfortunately, the constant $c(\epsilon)$ cannot be computed effectively
for any $\epsilon < 1/2$,
which is a serious difficulty for many applications.
In 1951,   Tatuzawa \cite{Tat} did improve on Siegel's theorem to give  an effective version for almost all cases by proving that
for any positive $\epsilon $
there does exist an effectively computable positive constant
$c(\epsilon)$ such that for all quadratic characters $\chi$, with  at
most one exception, $L(s, \chi)$ has no zeros in the interval $[1 -c(\epsilon)/q^{\epsilon}, 1].$

\subsection{Repulsion Property}
The possible exceptional zero would force all other zeros, real or otherwise, of all $L$-functions of the same modulus away 
from the real axis. We  state a quantitative version of the Deuring--Heilbronn result of 1933-34.

\begin{thm}
 There exist effective constants $c, c' > 0$ such that for any $T \ge  2$ and any $q \ge  1$, if for some quadratic $\chi( \md q), L(s,\chi) $ has an exceptional zero
$\beta \in [1-c/\log (qT),1]$,
then $\prod_{\chi } L(s,\chi)$,  the product over all characters of
 modulus $q$ including the exceptional one, has no other zero in the domain
\[\sigma \ge 1-\frac {c' |\log((1-\beta)\log qT) |}{\log qT},\  t \le T.\]
\end{thm}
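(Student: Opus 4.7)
My plan is to apply the classical positivity argument of Linnik, multiplied through by the quadratic character $\chi_1$ attached to the Siegel zero. Suppose for contradiction that some $L(s,\chi_2)$ with $\chi_2$ modulo $q$ has a zero $\rho_0 = \beta_0 + i\gamma_0$ with $|\gamma_0|\le T$ and $\beta_0$ closer to $1$ than the theorem permits. For real $s > 1$, I consider the Dirichlet series
\[
D(s) = \sum_{n\ge 2} \frac{\Lambda(n)(1+\chi_1(n))(3 + 4\Re(\chi_2(n)n^{-i\gamma_0}) + \Re(\chi_2^2(n)n^{-2i\gamma_0}))}{n^s},
\]
whose coefficients are non-negative since $1+\chi_1(n)\ge 0$ (as $\chi_1$ is real) and the identity $3+4\cos\theta+\cos 2\theta = 2(1+\cos\theta)^2\ge 0$. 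Because $\chi_1$ is quadratic, the products $\chi_1\chi_2$, $\chi_2^2$, $\chi_1\chi_2^2$ are all characters modulo $q$, so $D(s)$ unfolds as a fixed linear combination of six logarithmic derivatives $L'/L$ evaluated at $s$, $s+i\gamma_0$, or $s+2i\gamma_0$.

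Next, I would apply the standard partial-fraction expansion
\[
\frac{L'}{L}(s,\chi) = \sum_{|s-\rho|\le 1}\frac{1}{s-\rho} + O(\log q(|t|+2))
\]
to each term. For real $s > 1$ the positive contribution from the simple pole of $L(s,\chi_0)$ at $s=1$ produces $3/(s-1)$, while the dominant negative contributions come from the Siegel zero $\beta$ of $L(s,\chi_1)$ (weight $3$) and from the hypothetical zero $\rho_0$ of $L(s+i\gamma_0,\chi_2)$ (weight $4$). All other zero-contributions have the opposite sign from these dominant ones and can be absorbed into an error $O(\log qT)$. The inequality $D(s)\ge 0$ thus collapses to
\[
\frac{3}{s-1} - \frac{3}{s-\beta} - \frac{4}{s-\beta_0} + O(\log qT) \ge 0
\]
uniformly for $1 < s \le 2$.

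The final step is to optimise in $s$. The combination $3/(s-1) - 3/(s-\beta) = 3(1-\beta)/[(s-1)(s-\beta)]$ becomes small once $s-1$ is appreciably larger than $1-\beta$, so the inequality forces $4/(s-\beta_0) \le O(\log qT)$; a careful choice of $s$ in the range $(1,2)$ that exploits the smallness of $(1-\beta)\log qT$ then yields the claimed repulsion $1-\beta_0 \gg |\log((1-\beta)\log qT)|/\log qT$. The hard part will be precisely this optimisation, which must balance the pole cancellation against the error term to extract the logarithmic gain rather than merely the Landau--Page bound $\gg 1/\log qT$. An auxiliary difficulty is the case-by-case bookkeeping, namely treating $\chi_2 = \chi_1$ separately (so that $\chi_1\chi_2 = \chi_0$ contributes an additional pole), invoking the simplicity of the Siegel zero to ensure $\rho_0 \neq \beta$, and tracking the Mittag--Leffler remainder to render the constants $c, c'$ effective.
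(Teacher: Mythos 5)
The paper states this theorem without proof (it is part of the expository survey of the Deuring--Heilbronn phenomenon), so your proposal can only be judged on its own merits. As written, it has a genuine gap at exactly the point you flag as "the hard part": no choice of $s$ in your final inequality
\[
\frac{3}{s-1} - \frac{3}{s-\beta} - \frac{4}{s-\beta_0} + O(\log qT) \ \ge\ 0
\]
can produce the factor $\bigl|\log((1-\beta)\log qT)\bigr|$. Even in the most favourable case, where the Siegel zero cancels the pole perfectly and the first two terms vanish, the inequality only gives $4/(s-\beta_0) \le C\log qT$, hence $s-\beta_0 \gg 1/\log qT$, and after subtracting $s-1$ one lands at $1-\beta_0 \gg 1/\log qT$ --- the Landau--Page bound, with no dependence on $1-\beta$ whatsoever. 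The obstruction is structural: the $O(\log qT)$ error term, which comes from the gamma factors and the discarded zeros in the partial-fraction expansion of $L'/L$ at a single real point, does not shrink as $1-\beta\to 0$, so it caps the repulsion at $1/\log qT$. The $3$-$4$-$1$ positivity combined with $1+\chi_1(n)\ge 0$ is the right kind of ingredient, but evaluating $L'/L$ itself is too coarse a detector.

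To repair the argument you need a mechanism that converts the smallness of $1-\beta$ into a multiplicative gain before taking logarithms. The two standard routes are: (i) Tur\'an's power-sum method applied to $\sum_\rho (s-\rho)^{-k}$ (this is Linnik's and Bombieri's route, the latter being the source of Theorem~\ref{thm:rep_Bom} in the paper); or (ii) exploiting that the non-negative coefficients $\lambda = 1 * \chi_1$ of $\zeta(s)L(s,\chi_1)$ are supported on a very sparse set when $\beta$ is close to $1$, namely $\sum_{n\le x}\lambda(n)/n \ll (1-\beta)\log^2 x + x^{\beta-1}\log x$, which leads to an inequality of the shape $x^{-c(1-\beta_0)} \ll (1-\beta)\log^2 x$; taking logarithms of \emph{that} is what produces $1-\beta_0 \gg \log\bigl(1/((1-\beta)\log x)\bigr)/\log x$. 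A variant of your positivity setup can be salvaged by passing to high derivatives: $(-1)^k(L'/L)^{(k)}(s)$ with $k \asymp \log\bigl(1/((1-\beta)\log qT)\bigr)$ suppresses the contribution of distant zeros geometrically and amplifies that of $\beta$ and $\rho_0$, which is where the logarithm re-enters. Your case bookkeeping (treating $\chi_2=\chi_1$, simplicity of $\beta$, effectivity of constants) is sensible but secondary to this missing ingredient.
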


Here is a reformulated version of the repulsion phenomenon also due to Linnik in 1944
which he used to find the size of the least prime in an arithmetic progression.

 \begin{thm}
\label{thm:rep_linnik}
If  there exists an exceptional zero $\beta$
with $ 1 - \beta=\frac{\varepsilon}{\log q}$ for $\varepsilon$ sufficiently small,  then all other
zeroes $\sigma+it$ of $L$-functions of modulus $q$ are such that
\[  
1  - \sigma \geq c \frac{\log \frac{1}{\varepsilon}}{\log(q(2+|t|))}
\]
for an absolute positive constant $c$.
\end{thm}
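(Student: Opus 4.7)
My plan is to follow the classical Deuring--Heilbronn repulsion strategy: exploit the positivity of a suitable combination of Dirichlet series chosen so that the exceptional character appears, and compare this positivity against the Hadamard product expansion of $L'/L$ that measures proximity of zeros. Write $\chi_1$ for the exceptional real character with zero $\beta$, so $1-\beta = \varepsilon/\log q$, and let $\rho = \sigma + i\gamma$ be a non-trivial zero of $L(s,\chi)$ for some character $\chi \pmod q$.

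The starting point is the pointwise non-negativity
\[
  (1+\chi_1(n))\bigl|1+\chi(n) n^{-i\gamma}\bigr|^2 \ge 0.
\]
Multiplying by $\Lambda(n)/n^{s_0}$ with $s_0 = 1+\eta > 1$ and summing produces a non-negative linear combination of $-L'/L(s,\psi)$ values, with $\psi$ running over $\chi_0, \chi_1, \chi, \bar\chi, \chi_1\chi, \chi_1\bar\chi$ at shifts $s = s_0$ or $s_0 \pm i\gamma$. Applying the Hadamard partial-fraction expansion to each term and keeping only the ``worst'' zero of each $L$-function---the pole $s=1$ for $\chi_0$, the exceptional zero $\beta$ for $\chi_1$, and the conjugate pair $\rho,\bar\rho$ for $\chi,\bar\chi$ (the twists by $\chi_1$ contribute only an $O(\log q(2+|\gamma|))$ error since we use no specific zeros there)---yields the master inequality
\[
  \frac{2}{\eta + (1-\sigma)} + \frac{2}{\eta + (1-\beta)} \le \frac{2}{\eta} + C\log(q(2+|\gamma|)).
\]

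A single application of this master inequality already reveals that the closeness of $\beta$ to $1$ offers a saving, but only enough to reproduce the classical unconditional zero-free region of width $\sim 1/\log(q(2+|\gamma|))$. To extract the sharp $\log(1/\varepsilon)$ enhancement of Theorem~\ref{thm:rep_linnik}, I would follow Linnik's original Tur\'an-type power-sum method applied to the family of zeros of $\prod_\chi L(s,\chi)$ near $s=1$: the distinguished small value $L(\beta,\chi_1)=0$ feeds into a Tur\'an second-theorem estimate, and the cancellation it forces, propagated to the other zeros through a power-sum identity, amplifies the saving by a factor of order $\log(1/\varepsilon)$. Equivalently, the master inequality can be iterated at a geometric sequence of scales $\eta_k$ and telescoped to produce the same logarithm. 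The main obstacle is precisely this amplification step: the one-shot positivity argument is standard and cheap, but extracting the $\log(1/\varepsilon)$ enhancement requires careful quantitative bookkeeping of how the exceptional zero's repulsion propagates across the whole family of zeros, which is the technical heart of Linnik's argument.
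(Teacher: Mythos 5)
Your opening moves (the pointwise positivity of $(1+\chi_1(n))\bigl|1+\chi(n)n^{-i\gamma}\bigr|^2$, summing against $\Lambda(n)n^{-s_0}$, and the Hadamard partial-fraction bound keeping one zero per factor) are the standard first step, and the paper itself states Theorem~\ref{thm:rep_linnik} without proof as a classical result, so the comparison is against the literature. The problem is that, as you yourself concede, this master inequality only recovers the classical region $1-\sigma\gg 1/\log(q(2+|\gamma|))$: when $1-\beta$ is tiny the term $2/(\eta+1-\beta)$ is absorbed into $2/\eta$ and the exceptional zero contributes nothing. The factor $\log(1/\varepsilon)$ is the \emph{entire} content of the theorem, and your proposal defers exactly that step to two vaguely named devices, neither of which is executed. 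Moreover, the fallback you describe --- iterating the master inequality at a geometric sequence of scales $\eta_k$ and ``telescoping'' --- does not work as stated: each application at scale $\eta_k$ yields only $1-\sigma\ge c/\log(qT)-\eta_k$, and there is no mechanism by which summing or combining these produces a gain of $\log(1/\varepsilon)$; the inequality simply does not see how close $\beta$ is to $1$ beyond the threshold $1-\beta\ll 1/\log q$.

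The missing idea is a quantitative use of the smallness forced by the exceptional zero over a long range of $n$, not merely its presence in the partial-fraction expansion. Concretely: $\zeta(s)L(s,\chi_1)=\sum_n\lambda(n)n^{-s}$ has non-negative coefficients and vanishes at $\beta$, and from this one deduces that weighted sums of $\Lambda(n)(1+\chi_1(n))$ up to $x=q^{A}$ (with $A$ a fixed constant) are of size $O(\varepsilon)$ rather than $O(1)$, because the main term acquires a factor like $1-x^{-(1-\beta)}\asymp(1-\beta)\log x\asymp A\varepsilon$. Feeding this into the explicit formula (or the weighted positivity sum) twisted by $\chi$ at height $\gamma$, the contribution $x^{-(1-\sigma)}$ of any other zero $\rho=\sigma+i\gamma$ must then also be $O(\varepsilon)$, which forces $(1-\sigma)\log x\ge c\log(1/\varepsilon)$, i.e.\ $1-\sigma\ge c\log(1/\varepsilon)/\log(q(2+|\gamma|))$ after the usual uniformity in $\gamma$. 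This sparseness-plus-explicit-formula input (or, equivalently, the Tur\'an power-sum argument carried out in full, as in Linnik's original paper or Bombieri's \emph{Le grand crible}) is the technical heart you have identified but not supplied, so the proof as proposed is incomplete precisely where the theorem's assertion lives.
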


Compared to the classical estimate  $ \sigma \ge 1 - \frac{c}{\log(q(2+|t|))} $ with some absolute positive constant 
$c$, 
for a region where $L(s, \chi)$, for any $\chi\ \md q$, 
contains no zeros except at most one eventual exception, we now have a zero-free region wider
 by a factor of $\log \frac{1}{\varepsilon}$.

Theorem \ref{thm:rep_linnik}  above was  strengthened by Bombieri \cite{Bom} to

\begin{thm} \label{thm:rep_Bom}
Let $T\geq 2$ and 
$\beta$ be an exceptional zero with respect to the otherwise zero-free region  $\sigma \ge \frac{c}{\log T}, |t| \le T$, then there exist constants $c_1, c_2$ such that if
$(1-\beta)\log T\le c_2/e$, then for any zero $\sigma+it\neq \beta$ of
$L(s,\chi)$, we have  
\[1-\sigma \ge {c_1}\frac{\log\frac{c_2} {(1-\beta)\log T}}{\log T},\] 
where $ |t|\le T$ for every primitive $\chi$ of modulus $q\le T$.
\end{thm}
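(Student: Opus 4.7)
The plan is to employ the Deuring--Heilbronn positivity method in quantitative form. Let $\chi_1$ be the real exceptional character with Siegel zero $\beta$, and let $\rho=\sigma+it\neq\beta$ be any other zero of $L(s,\chi)$ for some primitive character $\chi$ of modulus $q\le T$ with $|t|\le T$. The starting point is that the Dirichlet series
\[
  F(s):=\sum_{n=1}^\infty \Lambda(n)\bigl(1+\chi_1(n)\bigr)\bigl|1+\chi(n)n^{-it}\bigr|^2 n^{-s}
\]
has non-negative coefficients (since $\chi_1$ is real and the second bracket is a modulus squared) and decomposes, upon expansion, as a non-negative linear combination of $-L'/L(s+\alpha,\psi)$ for $\psi$ ranging over $\chi_0,\chi_1,\chi,\bar\chi,\chi\chi_1,\bar\chi\chi_1$ and $\alpha\in\{0,\pm it\}$.

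Next I would analyse the associated Chebyshev-type partial sum $S(x)=\sum_{n\le x}\Lambda(n)(1+\chi_1(n))|1+\chi(n)n^{-it}|^2$ via the explicit formula applied to each $L$-function appearing in $F$. The principal piece supplies a main term $\asymp x$ from the pole of $\zeta$; the exceptional zero contributes $-2x^\beta/\beta$; the target zero $\rho$ (together with its conjugate partner coming from $\bar\chi$) contributes $-2\Re(x^\rho/\rho)$; and all remaining zeros are controlled by the hypothesised zero-free region $\sigma'\ge c/\log T$ together with classical zero-density estimates, yielding an error of size $x\,e^{-c'\sqrt{\log x}}$.

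The non-negativity $S(x)\ge 0$ then gives, in the regime $(1-\beta)\log x\le c_2/e$ where the elementary estimate $x-x^\beta/\beta\asymp x(1-\beta)\log x$ is valid,
\[
  x^\sigma\ \ll\ x(1-\beta)\log x + x\,e^{-c'\sqrt{\log x}}.
\]
Taking logarithms and choosing $\log x$ of order a sufficiently large constant times $\log T$ so as to absorb the error contribution, one obtains after rearrangement the desired inequality $1-\sigma\ge c_1\log(c_2/((1-\beta)\log T))/\log T$.

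The main technical obstacle is the positivity step in degenerate arithmetic configurations, namely when $\chi$ is itself complex (requiring the square $|1+\chi(n)n^{-it}|^2$ rather than the real part), or when one of $\chi$, $\chi\chi_1$, $\bar\chi\chi_1$ happens to coincide with a principal character, in which case an additional pole of $\zeta$ enters the decomposition of $F$ and must be subtracted off. This forces a case split, handled either by modifying the weight (for instance by squaring $1+\chi_1(n)$) or by absorbing the extra main term into the analysis; none of these adjustments affect the final quantitative bound beyond the values of the absolute constants $c_1$ and $c_2$.
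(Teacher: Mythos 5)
The paper itself offers no proof of this result; it is quoted from Bombieri \cite{Bom} in the expository survey section. So your argument has to be judged on its own merits, and there is a genuine gap in the final quantitative step.

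The positivity set-up (expanding $(1+\chi_1(n))\,|1+\chi(n)n^{-it}|^2$ into a non-negative combination of $-L'/L$'s, using the explicit formula, and invoking $S(x)\ge 0$) is indeed the standard entrance into Deuring--Heilbronn repulsion, and the main-term computation $2x-2x^\beta/\beta\asymp x(1-\beta)\log x$ and the $-x^\sigma/\sigma$ contribution of the target zero after the $n^{-it}$ twist are correct. The problem is the error estimate and the subsequent optimisation. The hypothesis only provides a zero-free strip of width $c/\log T$ for $|t|\le T$, not a Vinogradov-type region, so the contribution of the remaining zeros with $|\gamma'|\le T$ is of size $x^{1-c/\log T}\log^{O(1)}T$, and the truncation cost of the explicit formula is roughly $x\log^2(xT)/T$. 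Writing $\log x = C\log T$ and $\eta=(1-\beta)\log T$, the first error is $x\,e^{-cC}\log^{O(1)}T$, so forcing it below the main term $x\eta\,C\log T$ requires $C\gg \log(1/\eta)$, i.e.\ $\log x\gg \log T\cdot\log(1/\eta)$. But then the conclusion
\[
1-\sigma \ \ge\ \frac{\log\bigl(1/(\eta\,C\log T)\bigr)}{\log x}\ \asymp\ \frac{\log(1/\eta)}{\log(1/\eta)\,\log T}\ \asymp\ \frac{1}{\log T}
\]
collapses back to the classical width: the factor $\log(1/\eta)$ cancels and the claimed gain $1-\sigma\gg \log(c_2/\eta)/\log T$ is lost. (Your quoted error $x\,e^{-c'\sqrt{\log x}}$ is not available here; it would require $\log x\gg\log^2 T$, which only aggravates the dilution.) Concretely, for $\eta\asymp T^{-1/2}$ the theorem asserts $1-\sigma\gg 1$, whereas the single-$x$ positivity argument above yields only $1-\sigma\gg 1/\log T$.

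This is precisely the obstruction that Tur\'an's power-sum method and Bombieri's large-sieve/zero-density route were designed to overcome: one must either replace $S(x)$ by a family of test quantities indexed by $k$ and use a power-sum lower bound to make the target zero contribution simultaneously large against all competing zeros, or pass through a zero-density estimate of the form $N'(\alpha,q,T)\ll (1-\beta)(\log q)(1+\log T/\log q)(qT)^{O(1-\alpha)}$ (the one stated in the paper right after the theorem) and read off the zero-free region by setting the right-hand side $<1$. A single choice of $x$ with a sharp cut-off $n\le x$ cannot deliver the full $\log(1/\eta)$-amplified region; you would need to make this additional mechanism explicit before the argument closes.
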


This can be written in terms of a density estimate.  Let $N(\alpha, q,T)$  denote the number of zeroes, counted with multiplicity, of any $ L$  function of modulus $q$ with $\alpha \leq \sigma \leq 1$  and  
$0 \leq t \leq T$  and let $N'$ denote the case when  $\beta$ is omitted.  Then there is an  improvement

 \[  N'(\alpha,q,T) \ll (1-\beta)(\log q) 
\Big(1 + \frac{\log T}{\log q}\Big) (qT)^{O(1-\alpha)} 
 \]
with effective implied constants  over Linnik's density estimate

\[  N(\alpha,q,T) \ll (1-\beta)(\log qT) (qT)^{O(1-\alpha)} .
 \]

In \cite{FI} Friedlander and Iwaniec state the `ultimate Deuring--Heilbronn property' as
\begin{thm}\label{thm:rep_FI}
Let $\chi(\md q)$ be a real primitive character of conductor $q$ with the largest real zero $\beta$ and let 
$\eta=\frac{1}{(1-\beta)\log q}\ge 3$. 
Then $L(s,\chi)$
has no zeros other than $\beta$ in the region 
$\sigma > 1-\frac{c\log \eta}{\log q(|t|+1)}$ where $c$ is an absolute positive constant.

\end{thm}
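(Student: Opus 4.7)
The plan is to follow the classical Deuring--Heilbronn template (cf. Theorems~\ref{thm:rep_linnik} and~\ref{thm:rep_Bom}): combine a non-negative Dirichlet series with the Hadamard partial fraction expansion of $L'/L(s,\chi)$, and exploit the fact that the Siegel zero $\beta$ lies extremely close to~$1$. Throughout, set $\mathcal{L}=\log(q(|t|+1))$ and $\lambda=1-\beta$, so that the hypothesis $\eta\ge 3$ says $\lambda\log q\le 1/3$. Let $\rho=\sigma_1+it\ne\beta$ be a hypothetical zero of $L(s,\chi)$ and write $\mu=1-\sigma_1$; the goal is $\mu\gg \log\eta/\mathcal{L}$.

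First I would invoke positivity: since $\chi$ is real, $\chi(n)\in\{-1,0,+1\}$ and so $(1+\chi(n))(1+\cos(t\log n))\Lambda(n)\ge 0$ for $(n,q)=1$. Testing this against $n^{-\sigma_0}$ with $\sigma_0=1+\delta>1$ and using $-L'/L(s,\chi)=\sum_n \Lambda(n)\chi(n)n^{-s}$ yields
\[
\bigl(-\tfrac{\zeta'}{\zeta}(\sigma_0)\bigr)+\mathrm{Re}\bigl(-\tfrac{\zeta'}{\zeta}(\sigma_0+it)\bigr)+\bigl(-\tfrac{L'}{L}(\sigma_0,\chi)\bigr)+\mathrm{Re}\bigl(-\tfrac{L'}{L}(\sigma_0+it,\chi)\bigr)\ge -C\log q.
\]
Next, applying the Hadamard partial fraction expansion and retaining only the contributions of $\beta$ and $\rho$ (all other zero contributions have the favourable sign, since $\mathrm{Re}(s-\rho')^{-1}>0$ for $\mathrm{Re}\,s>1$), one obtains
\[
-\tfrac{L'}{L}(\sigma_0,\chi)\le -\tfrac{1}{\delta+\lambda}+C\mathcal{L},\quad \mathrm{Re}\bigl(-\tfrac{L'}{L}(\sigma_0+it,\chi)\bigr)\le -\tfrac{1}{\delta+\mu}-\tfrac{\delta+\lambda}{(\delta+\lambda)^2+t^2}+C\mathcal{L},
\]
together with standard bounds on $-\zeta'/\zeta$. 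Assembling everything produces the schematic inequality
\[
\frac{1}{\delta+\lambda}+\frac{1}{\delta+\mu}+\frac{\delta+\lambda}{(\delta+\lambda)^2+t^2}\le \frac{1}{\delta}+\frac{\delta}{\delta^2+t^2}+C'\mathcal{L}.
\]

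The final step is to optimise $\delta$ so as to extract the factor $\log\eta$. A single naive choice $\delta\asymp 1/\mathcal{L}$ only recovers the classical bound $\mu\gg 1/\mathcal{L}$. The logarithmic improvement is obtained by a bootstrap: starting from the classical lower bound, one iterates the inequality, at each pass choosing $\delta$ adapted to the current bound so that the Siegel term $1/(\delta+\lambda)$ on the left (large because $\lambda\ll 1/\mathcal{L}$) nearly cancels the free $1/\delta$ on the right, yielding a constant-factor improvement in the bound on $\mu$. After $O(\log\eta)$ iterations one reaches $\mu\gg \log\eta/\mathcal{L}$.

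The main obstacle is controlling the error $C'\mathcal{L}$ throughout the bootstrap: at each iteration one must verify that this error does not swallow the gain. Two regimes deserve separate attention: the bulk case $|t|\gtrsim 1/\mathcal{L}$, where the $\delta/(\delta^2+t^2)$ terms are negligible; and the small-$|t|$ range $|t|\lesssim \lambda$, where the Siegel zero's contribution to the shifted log-derivative must be retained more carefully. An alternative route, followed by Friedlander and Iwaniec in~\cite{FI}, replaces the iteration by a direct Tur\'an--Hal\'asz power-sum argument that builds the $\log\eta$ factor in from the start, at the cost of requiring sharper auxiliary estimates.
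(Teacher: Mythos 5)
The paper does not prove Theorem~\ref{thm:rep_FI}; it is quoted verbatim from Friedlander--Iwaniec~\cite{FI} as part of the expository survey, so there is no in-paper argument to compare against, and I will assess the sketch on its own merits.

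The setup (positivity of $\Lambda(n)(1+\chi(n))(1+\cos(t\log n))$, Hadamard partial fractions, discarding all zeros but $\beta$ and $\rho$) is correct and standard, and it does yield the inequality you state. The gap is in the final step. After rearranging, your inequality reads
\[
\frac{1}{\delta+\mu}\le\Bigl(\frac{1}{\delta}-\frac{1}{\delta+\lambda}\Bigr)
+\Bigl(\frac{\delta}{\delta^2+t^2}-\frac{\delta+\lambda}{(\delta+\lambda)^2+t^2}\Bigr)+C'\mathcal{L}
\le \frac{2\lambda}{\delta^2}+C'\mathcal{L},
\]
and crucially the right-hand side does not depend on $\mu$. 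There is therefore nothing to feed back: running the argument again at a new value of $\delta$ produces the same static inequality, and optimising over $\delta$ (the optimum is $\delta\asymp\sqrt{\lambda/\mathcal{L}}$, or equivalently $\delta\asymp 1/\mathcal{L}$) returns only $\mu\gg 1/\mathcal{L}$. The announced ``constant-factor improvement per pass'' never materialises because the bound on $\mu$ is never an input to the inequality. So the claim that $O(\log\eta)$ iterations deliver $\mu\gg(\log\eta)/\mathcal{L}$ is unsubstantiated, and I do not see how to make it work.

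There is also a structural reason the $L'/L$ one-zero-at-a-time route cannot produce the logarithm. In that framework a competing zero at $1-\mu+it$ contributes $1/(\delta+\mu)$, which decays only \emph{polynomially} in $\mu/\delta$, while the Siegel zero gain $\lambda/\delta^2$ is, for $\delta\asymp 1/\mathcal{L}$, of the same order $\mathcal{L}/\eta$ as the unavoidable error $C'\mathcal{L}$ up to a constant. To extract a factor $\log\eta$ one needs the competing zero's influence to decay \emph{exponentially} in $\mu$, and that happens on the explicit-formula side: from $\psi(x)+\psi(x,\chi)\ge 0$ together with $\psi(x,\chi)\approx -x^\beta/\beta-\sum_{\rho\ne\beta}x^\rho/\rho$ one gets, after aligning the phase and choosing $\log x\asymp\mathcal{L}$, roughly $x^{1-\mu}/|\rho|\ll x-x^\beta\asymp x\lambda\log x\asymp x/\eta$, i.e.\ $\mu\,\mathcal{L}\gg\log\eta$. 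This exponential-versus-polynomial discrepancy is exactly what your inequality cannot see, which is why neither a single optimisation of $\delta$ nor a bootstrap on the same inequality can recover the result. A correct proof must pass through the explicit formula with a suitable kernel, or a Tur\'an-type power sum, or Linnik's sieve-weighted variant -- not through an iteration of the Hadamard three-point inequality.
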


\subsection {Bounds for $L(1,\chi)$}

We know  at least since Hecke and Landau that  zeros of $L(s,\chi)$ and its value at $s=1$ are closely related.
If $L(1,\chi)$ is sufficiently small relative to the conductor, then there  is a Siegel zero and conversely.
More precisely, if $L(1,\chi)\leq \frac{c}{\log q}$ for a small constant $c>0$,
then $1-\beta \leq \frac{1}{\log q}$.
Using the Deuring--Heilbronn repulsion property,
Friedlander and Iwaniec recently proved  \cite{FI}
that 
\begin{equation}\label{eqn:FI}
\{1-\beta \ll (\log q)^{-3}\log\log q \}\implies  \{L(1, \chi) \ll (\log q)^{-1}\}.
\end{equation}
Goldfeld \cite{Gol1} 
provided an asymptotic result for the location of the Siegel zero. In fact, when $1-\beta < \frac c{\log q}$, he obtained a precise asymptotic formula 
 \begin{equation}\label{eq:Goldf}
 1-\beta \sim \frac{6}{\pi^2}L(1,\chi)\Big(\sum a^{-1}\Big)^{-1}
 \end{equation}
 where the summation is over all reduced quadratic forms $(a,b,c)$ of discriminant $-q$.  

Dirichlet  expressed the value of $L(1, \chi)$ where $\chi(n)=(\frac{-q}n) $ is the real primitive character of conductor $q$ in terms of the number $h(q)$ of equivalence classes of binary quadratic forms of discriminant  $q$,
which can equivalently be formulated in terms of  the number of ideal classes of an imaginary quadratic number field $\K=\Q(\sqrt -q)$.  
From the class number formula $L(1,\chi) =\frac{\pi h(q)}{ \sqrt {-q}} $ (here $q<-4$)
 one obtains the non-vanishing of 
$L(1, \chi)$ which is not that obvious when $\chi$ is real, and leads to the prime number theorem in arithmetic progression.
Another obvious consequence of the above formula is the elementary lower bound 
\[L(1, \chi) \gg \frac{1}{q^{1/2}}.\]
Bounding $L(1,\chi)$  is  equivalent to estimating the size of the class number of the imaginary quadratic field $\K=\Q(\sqrt -q)$, another important question in number theory. The corresponding formula is $ L(1, \chi)=\frac{2\pi h_\K}{w_{K}\sqrt {d_{\K / \Q}}}$ for the class number $h_\K $  and discriminant $d_{\K / \Q}$ of $\K$, $w_{K}$ the order of the group of units with regulator being 1 and the LHS being the residue of the Dedekind zeta function at $s=1$. 

Good effective lower bounds are more difficult to obtain.  Goldfeld \cite{Gol}  in 1976  using known cases of the Birch and Swinnerton-Dyer 
 conjecture for elliptic curves showed that
\[L(1,\chi)\gg \frac{\log q}{\sqrt q(\log \log q)} \]
for $q\ge 3$, the implied constant being effective. 
This together with  Gross--Zagier's work of 1983  is a major step in the Gau\ss\  class number problem. 
\begin{thm} [Goldfeld--Gross--Zagier] For every $\epsilon > 0$ there exists an effectively computable positive constant $c$
such that $h(-q) >(c\log q)^{1-\epsilon}$.
\end{thm}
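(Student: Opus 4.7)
The plan is to transform the class-number lower bound into an $L$-value lower bound via Dirichlet's class-number formula, and then invoke the combined Goldfeld + Gross--Zagier analytic input recalled in the discussion above. For a fundamental discriminant $-q<-4$ we have $h(-q) = \sqrt{q}\,L(1,\chi_{-q})/\pi$, so the statement $h(-q) > (c\log q)^{1-\epsilon}$ is equivalent to the assertion
\[ L(1,\chi_{-q}) \gg_\epsilon \frac{(\log q)^{1-\epsilon}}{\sqrt q}. \]
The sharper estimate $L(1,\chi) \gg \log q/(\sqrt q\,\log\log q)$ attributed to Goldfeld already gives this with room to spare: for any fixed $\epsilon>0$ and $q$ sufficiently large, $\log q/\log\log q > (c\log q)^{1-\epsilon}$, and finitely many small $q$ are absorbed into $c$. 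Non-fundamental discriminants reduce to the fundamental case by the standard genus-theoretic descent.

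The real content is therefore Goldfeld's effective lower bound for $L(1,\chi)$. His method works with a double product of the shape $L(s,E)\,L(s, E\otimes \chi_{-q})$ for a modular elliptic curve $E/\Q$, expressing a Mellin-transform average at $s=1$ as a rapidly convergent sum of controllable sign from which $L(1,\chi_{-q})$ can be bounded below. Crucially, the construction yields an \emph{effective} constant precisely when $L(s,E)$ vanishes to order at least three at the central point $s=1$: the triple zero cancels poles introduced when one shifts the contour through $s=1$ and allows one to neutralise the effect of a hypothetical Siegel zero of $\chi_{-q}$ without destroying the inequality. Without such an auxiliary $E$, Goldfeld's argument is vacuous.

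The Gross--Zagier theorem supplies exactly this existential input. Their celebrated identity expresses $L'(1, E\otimes \chi_{-q})$ as a nonzero constant multiple of the N\'eron--Tate height of a Heegner point on $E$ attached to $\Q(\sqrt{-q})$; combined with functional-equation sign considerations and a concrete rank-one example, this produces a modular elliptic curve whose Hasse--Weil $L$-function vanishes to order at least three at the central point, thereby verifying Goldfeld's hypothesis. The main obstacle throughout the plan is precisely this height identity: its proof requires arithmetic intersection theory on modular curves together with the Rankin--Selberg integral representation of $L'(1, f\otimes g)$ and a delicate matching of archimedean with non-archimedean local contributions. By comparison, Goldfeld's analytic step is soft and the class-number formula is classical, so the Gross--Zagier formula is the single deep ingredient on which the entire argument rests.
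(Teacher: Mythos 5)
This theorem is stated in the paper's expository section without any proof; the authors merely cite Goldfeld~\cite{Gol} for the conditional analytic step and attribute the unconditional completion to Gross--Zagier (1983). So there is no in-paper argument to compare your proposal against; what follows is a check of your outline against the literature the paper is implicitly invoking.

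Your overall architecture is right and matches the standard account: reduce via the class-number formula to a lower bound for $L(1,\chi_{-q})$, observe that Goldfeld's 1976 theorem produces such a bound effectively \emph{provided} one can exhibit a modular elliptic curve $E/\Q$ whose $L$-function vanishes to order at least three at the centre, and then point to Gross--Zagier as the source of such a curve. That is exactly the chain of reasoning the paper is summarising when it writes ``Goldfeld\ldots using known cases of the Birch and Swinnerton-Dyer conjecture\ldots together with Gross--Zagier's work of 1983.''

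Two inaccuracies in the Gross--Zagier step should be corrected. First, the auxiliary curve is not a ``concrete rank-one example'': the curve actually used (conductor $5077$, $y^{2}+y=x^{3}-7x+6$) has Mordell--Weil rank three. Second, and more substantively, the logic of how the height formula is deployed is inverted in your account. One does not use $L'(1,E\otimes\chi)\neq 0$; one uses the formula in the other direction. The functional equation of $L(s,E)$ has sign $-1$, so the order of vanishing is odd and hence at least one. Gross and Zagier then apply their height identity to a suitable imaginary quadratic field and show that the relevant Heegner point is torsion, which forces $L'(1,E)=0$. Combined with the odd sign, this yields analytic rank at least three. The way you have written it, ``expresses $L'(1, E\otimes \chi_{-q})$ as a \emph{nonzero} constant multiple of the height'' and then ``a concrete rank-one example,'' suggests the opposite conclusion $L'\neq 0$, which would certify rank exactly one and would be useless for Goldfeld's hypothesis. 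Everything else --- the class-number reduction, the heuristic that the triple zero of $L(s,E)$ is what neutralises a potential Siegel zero and preserves effectivity, and the identification of the height formula as the single deep ingredient --- is accurate and in the right spirit.

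Finally, a small caution on the opening reduction: the paper records Goldfeld's bound in the stronger form $L(1,\chi)\gg \log q/(\sqrt q\,\log\log q)$, while the theorem as stated only asserts $h(-q)>(c\log q)^{1-\epsilon}$. The weaker exponent $1-\epsilon$ is what one cleanly obtains unconditionally once all losses (including the contribution of the conductor of the auxiliary curve and the product over ramified primes that Oesterl\'e later made explicit) are accounted for; invoking the sharper form ``with room to spare'' and then proving the weaker one is harmless logically, but you should be aware that the two are not interchangeable in the literature and the $(\log q)^{1-\epsilon}$ statement is the one that is straightforwardly effective.
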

 This corresponds to a zero-free region of $L(s,\chi)$ of size $[1-c_0\frac {\log^{c_1}(q)}  {\sqrt q}, 1]$ for some effective positive constants $c_0, c_1$ and  for all real primitive characters.

Oesterl\'e's calculation of the involved constant in 1985 makes it possible to state this bound
for $q>0$ as 
\[L(1,\chi)>\frac{\pi}{55\sqrt q}\log q \prod_{p\mid q}  \Big(1- \frac{2\sqrt p}{p+1}\Big).\]

Rather recently Bennett et al.~\cite{BMOR}  proved that if $\chi$ is a primitive quadratic character with conductor $q > 6677$, 
then $L(1, \chi) > \frac{12}{\sqrt q} $. 

We are still far from the plausible
lower bound  $L(1, \chi) \gg (\log q)^{-1}$ which holds in many cases, for example for complex characters
with an effective constant.

Aisleitner et al.\ \cite{AMMP}  in 2019 showed the existence, for $q$ sufficiently large, of an extremal non-principal character which satisfies,
for constants $C$ and $\gamma$,
$ |L(1,\chi)|\ge e^{\gamma}(\log\log q+\log\log\log q-C)$ 
 using the method of resonance for detecting large values of the Riemann zeta function. Up to the constant, this corresponds to the predicted order of the extremal values.

A simple unconditional upper bound  is $L(1, \chi) \ll \log q$.
The implied constants have been worked on by a variety of methods.
For example, for complex characters Granville and Soundararajan \cite{GS} determine the  constant 
$c_k$  for primitive characters of order $k$ for which the bounds $|L(1, \chi)| \leq (c_k+o(1))\log q$ hold true. 
For real primitive characters, the constant 
$c_2=\frac{1}{4}(2-\frac {2}{\sqrt e}+o(1))\log q$ was obtained by Stephens for prime characters \cite{Ste} and Pintz \cite{Pin} extended this to non-prime characters. 

\subsubsection{Conditional Bounds} The optimal bounds of $L(1,\chi)$ under the condition
of GRH  are
\[ (\log \log q)^{-1}\ll L(1,\chi)\ll \log \log q
\]
where the implied constants are effective. Precisely speaking,
\begin{thm}[Littlewood 1928]
If the Generalised Riemann Hypothesis is true then
\[\Big(\frac 1{2}+o(1)\Big)\frac{\pi^2}{6e^{\gamma}\log\log q}\le L(1,\chi)\le(2+o(1))e^{\gamma} \log \log q
\] where $\gamma$ is Euler's constant.
\end{thm}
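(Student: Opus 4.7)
The plan is Littlewood's Euler-product truncation. For a cutoff $y=y(q)\to\infty$ I factor
\[
L(1,\chi) \;=\; \prod_{p\le y}\Bigl(1-\frac{\chi(p)}{p}\Bigr)^{-1}\cdot \prod_{p>y}\Bigl(1-\frac{\chi(p)}{p}\Bigr)^{-1},
\]
aiming to show, conditionally on GRH, that the tail product is $1+o(1)$ when $y=(\log q)^2$, and to control the finite head via Mertens' theorems.

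Granting the tail estimate, the second step is essentially elementary. Mertens gives $\prod_{p\le y}(1-1/p)^{-1}\sim e^\gamma \log y = 2 e^\gamma \log\log q\,(1+o(1))$, and combining this with $\prod_p(1-1/p^2)=6/\pi^2$ yields $\prod_{p\le y}(1+1/p)^{-1}\sim \tfrac12\cdot \pi^2/(6e^\gamma\log\log q)$. The pointwise bounds
\[
\Bigl(1+\frac1p\Bigr)^{-1}\;\le\;\Bigl|1-\frac{\chi(p)}{p}\Bigr|^{-1}\;\le\;\Bigl(1-\frac1p\Bigr)^{-1}
\]
then deliver simultaneously the upper constant $2 e^\gamma$ and the lower constant $\tfrac12\pi^2/(6e^\gamma)$ of the theorem; the factor $\tfrac12$ on the lower side is exactly $(\log y)^{-1}\cdot 2\log\log q$, which is precisely what forces the choice $y=(\log q)^2$.

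The delicate step is therefore the tail estimate: under GRH,
\[
\log\prod_{p>y}\Bigl(1-\frac{\chi(p)}{p}\Bigr)^{-1} \;=\; \sum_{p>y}\frac{\chi(p)}{p}\;+\;O(1/y)\;=\;o(1).
\]
Prime powers of order $\ge 2$ are handled by absolute convergence. For the first-power sum, direct use of $\psi(x,\chi)\ll x^{1/2}(\log qx)^2$ combined with partial summation only yields $o(1)$ past $y=(\log q)^{4+\veps}$, which would lose the sharp constants; instead I would work with $\log L(s,\chi)$ itself, expanding it via the Hadamard factorization as a sum over GRH zeros $\rho=\tfrac12+i\gamma$, and move a contour down to $\sigma=\tfrac12+\delta$, bounding the resulting zero-sums with the standard counting estimate $N(T,\chi)\ll T\log(qT)$. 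This is where the full strength of GRH (zeros exactly on $\Re s=\tfrac12$) is actually used, and it is the principal obstacle of the proof; once the tail is controlled at the sharp cutoff $y=(\log q)^2$, the Mertens step and the final passage to the stated constants are routine.
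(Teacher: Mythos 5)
The paper states Littlewood's theorem without proof, as a classical citation in its expository section, so there is no internal argument to compare against and your sketch must be judged on its own. The overall strategy is the standard one (indeed Littlewood's original), and the elementary half is correct: Mertens gives $\prod_{p\le y}(1-1/p)^{-1}\sim e^{\gamma}\log y$ and $\prod_{p\le y}(1+1/p)^{-1}\sim \pi^2/(6e^{\gamma}\log y)$, so any cutoff with $\log y=(2+o(1))\log\log q$ produces exactly the constants $2e^{\gamma}$ and $\tfrac12\cdot\pi^2/(6e^{\gamma})$, and the pointwise bounds on $|1-\chi(p)/p|^{-1}$ give both directions at once (for real $\chi$, the only case in which the two-sided inequality for $L(1,\chi)$ itself is meaningful).

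The gap is that the entire analytic content --- that under GRH the tail beyond $y\approx(\log q)^2$ contributes $1+o(1)$ --- is asserted rather than proved, and the route you indicate needs repair in two places. First, the sharp cutoff $y=(\log q)^2$ is marginally too small even for the refined argument: the standard GRH estimate is of the shape $\log L(1,\chi)=\sum_{n\le x}\Lambda(n)\chi(n)/(n\log n)+O\bigl((\log qx)/\sqrt{x}\bigr)$ after smoothing, which is $o(1)$ only for $x\ge(\log q)^{2+\epsilon}$ or $(\log q)^{2}(\log\log q)^{C}$; this is harmless for the constants since $\log x=(2+o(1))\log\log q$ still holds, but it should be said. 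Second, ``expand via Hadamard and bound the zero-sum with $N(T,\chi)\ll T\log(qT)$'' does not close as stated for the sharp sum $\sum_{p>y}\chi(p)/p$: the zeros contribute essentially $\sum_\rho x^{\rho-1}/((\rho-1)\log x)$, and $\sum_\rho|\rho-1|^{-1}$ diverges, so one must introduce a smoothing weight (e.g.\ $\log(x/n)$, producing a double pole and the convergent $\sum_\rho|\rho-1|^{-2}\ll\log q$) and then remove it, or else work with $\log L(1+1/\log x,\chi)$ and integrate $L'/L$ back to $\sigma=1$. These are standard manoeuvres, carried out for instance in the Lamzouri--Li--Soundararajan paper this survey cites, but as written the central step of your proof is a placeholder rather than an argument.
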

Only the implied constants in the above can be improved because there actually exist infinitely many $q$ for which the special value 
of the corresponding character at $s=1$ correspond to the above magnitude of orders. 
The classical unconditional  $\Omega$ results, that
$( 1+o(1))\frac{\pi^2}{6e^{\gamma}\log\log q}\ge L(1,\chi)$
and 
$L(1,\chi)\ge(1+o(1))e^{\gamma} \log \log q$
hold for infinitely many $q$ \cite{Chowla} show that
 there is a factor of 2 that remains undetermined for the extreme values.

We cite  one example of a recent refined upper and lower bound established by  Lamzouri et al.~\cite{LLS}  assuming the GRH  for characters of large conductor and studying certain character sums.  For  $q\ge10^{10}$, the bounds obtained therein can be written in a simplified  manner as
\[\frac{\pi^2}{12e^{\gamma}\log\log q}< |L(1,\chi)|<2e^{\gamma}\log\log q.\]

The lower bound can be improved a lot by admitting the existence of Landau--Siegel zeros,
and thus weakening the GRH. 
One such assumption, sometimes called the Modified Generalised Riemann Hypothesis (MGRH), is that 
 all the zeros of  $L(s, \chi)$ lie either on the critical line  or on the  real axis. Sarnak and Zaharescu \cite{SZ}  
showed that if all Dirichlet $L(s,\chi)$ with $\chi$ real  satisfy the MGRH then 
\[ L(1, \chi)\ge \frac{c^{\epsilon}}{(\log |q|)^{\epsilon}}\]
for any positive $\epsilon$. The above constant is ineffective but the bounds can be made effective under certain additional conditions. 
These bounds use the explicit formula with an appropriately constructed  kernel function.

 Assuming that the GRH 
holds except for one possible exception, Friedlander and Iwaniec obtained an improved version of  (\ref{eqn:FI}). 
They proved that:
\begin{thm}[\cite{FI}]
\label{thm:FI} 
Let the GRH be true except for only one $\beta>3/4$.
Now if  $1-\beta\ll (\log \log q)^{-1}$, then 
\[ 1-\beta\ll L(1,\chi)\ll (1-\beta)(\log \log q)^{2}.
\]
\end{thm}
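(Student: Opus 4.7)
The idea is to leverage the simple structure of the zero at $\beta$. Taylor expansion at $\beta$ gives
\[
   L(1,\chi) = (1-\beta)L'(\beta,\chi) + O\bigl((1-\beta)^2\!\sup_{\sigma\in[\beta,1]}|L''(\sigma,\chi)|\bigr).
\]
Under the hypothesis that GRH holds except for the single zero $\beta$, a Cauchy estimate on the entire function $g(s) := L(s,\chi)/(s-\beta)$ (whose remaining zeros all lie on $\Re s = 1/2$) gives $L''(\sigma,\chi) \ll (\log\log q)^{O(1)}$, so under the additional hypothesis $1-\beta \ll (\log\log q)^{-1}$ the quadratic error term is negligible compared with the linear main term. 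It therefore suffices to establish the two-sided estimate
\[
   1 \;\ll\; L'(\beta,\chi) \;\ll\; (\log\log q)^2.
\]

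For the upper bound, apply Littlewood's 1928 method to $g$. The zero configuration of $g$ under our hypothesis is exactly the zero configuration of a Dirichlet $L$-function under the full GRH, so Littlewood's proof---via Perron's formula truncated at a parameter $T\asymp(\log q)^{O(1)}$, balanced against the prime sum $\sum_{p\le T}\chi(p)/p\ll \log\log q$ and the comparable zero sum over $|\gamma|\le T$---applies verbatim and yields $|g(s)| \ll (\log\log q)^2$ uniformly on the real segment $[\beta,1]$. Taking $s=\beta$ and using $g(\beta) = L'(\beta,\chi)$ gives the upper bound.

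For the matching lower bound, invoke Goldfeld's asymptotic \eqref{eq:Goldf},
\[
   1-\beta\;\sim\;\frac{6}{\pi^2}\,L(1,\chi)\Big(\sum_{(a,b,c)}a^{-1}\Big)^{-1},
\]
where the sum runs over reduced quadratic forms of discriminant $-q$. Since this sum is trivially at least $1$, one obtains $L(1,\chi) \gg 1-\beta$ as soon as the asymptotic is valid. Its natural range of validity $1-\beta\ll(\log q)^{-1}$ is narrower than that in the theorem, so we either extend the asymptotic to the broader range $1-\beta\ll(\log\log q)^{-1}$ using the strength of the near-GRH assumption, or combine the lower-bound Littlewood estimate $|g(1)|\gg (\log\log q)^{-O(1)}$ with the class number bound $L(1,\chi)\gg 1/\sqrt q$.

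The main obstacle is in the quantitative Littlewood step: the exponent $2$ (and not a larger power) in $(\log\log q)^2$ appears only if the truncation parameter $T$ in Perron's formula is chosen to balance optimally the contributions from the prime sum and from the zero sum, each of which has to come out to $O(\log\log q)$. The lower bound presents the secondary difficulty of extending Goldfeld's asymptotic into the wider hypothesis range; this appears feasible under the near-GRH assumption but requires revisiting the effective constant in his class-number argument, and it is this extension---rather than the upper bound---that is technically most delicate.
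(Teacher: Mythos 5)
The paper does not prove this theorem; it cites it from Friedlander--Iwaniec \cite{FI}, so there is no internal proof to compare against. Judged on its own merits, your plan has the right general shape---reduce the relationship between $L(1,\chi)$ and $1-\beta$ to a two-sided bound on $L'$ near $\beta$---but several of the intermediate claims are unjustified or wrong as stated.

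First, the Cauchy estimate you invoke for $L''$ is not worked out, and it is not clear it gives $(\log\log q)^{O(1)}$: a Cauchy circle of fixed radius around $\sigma\in[\beta,1]$ reaches into regions where $|L(s,\chi)|$ is only controlled by polynomial-in-$q$ convexity bounds, and shrinking the radius introduces compensating large denominators. The Taylor remainder control needs a genuine argument (for instance via the partial-fraction expansion of $L'/L$ and the zero sum under near-GRH), not a bare appeal to Cauchy. Second, the assertion that Littlewood's method ``applies verbatim'' to $g(s)=L(s,\chi)/(s-\beta)$ is not correct as stated: $g$ has no Euler product, so the Perron/prime-sum machinery does not transfer without adaptation, and---more to the point---Littlewood's method for $L$ itself under GRH yields $\ll\log\log q$, not $(\log\log q)^2$. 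You never explain where the extra factor of $\log\log q$ in the theorem comes from; heuristically it arises because $g$ behaves like a logarithmic derivative of $L$ (division by $s-\beta$ plays the role of differentiation), and that extra factor must be accounted for explicitly. Third, and most seriously, the lower bound $1-\beta\ll L(1,\chi)$ is essentially unproved in your plan: Goldfeld's asymptotic \eqref{eq:Goldf} is only valid in the much narrower range $1-\beta<c/\log q$, and you acknowledge this but do not actually extend it; the fallback you suggest, $L(1,\chi)\gg q^{-1/2}$, is far too weak to yield $L(1,\chi)\gg 1-\beta$ in the regime $1-\beta\ll(\log\log q)^{-1}$. That step needs a different idea (e.g.\ a positivity/explicit-formula argument controlling the zero sum $\sum_{\rho\ne\beta}(1-\rho)^{-1}$ under the near-GRH hypothesis), and at present it is a genuine gap rather than a deferred technicality.
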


\section{Better Siegel zero bounds from Weak 
Goldbach  conjectures} 

Connections between Siegel zeros and the Goldbach problem
were studied, for example in
\cite{BHMS} and \cite{Fei}. Among the classical conjectures of the Goldbach problem is one due to 
Hardy and Littlewood in 1923
that predicts an equivalence between  the number of  representations of an even number  as a sum of two primes and a singular series, 
$g(n)=~\sum_{n=p_1+p_2}1 \sim \Ss(n)$,
where 
  \[
  \Ss(n):= \frac{n}{\varphi(n)}\prod_{p\nmid n}\Big(1-\frac{1}{(p-1)^2}\Big)\cdot \frac{n}{\log^{2}n}= 2C_{2} \Big(\prod_{\substack{p\mid n \\ p>2}} \frac{p-1}{p-2}\Big) 
\cdot \frac{n}{\log^{2}n} 
  \]
with the twin prime constant
\[C_2=  \prod_{p>2} \Big(1-\frac{1}{(p-1)^{2}}\Big)\]
which is approximately 0.66. 

Fei \cite{Fei} obtained an upper 
bound for $\beta$ under a weakened 
form of the  Hardy--Littlewood conjecture (WHL), namely
\begin{conj}[WHL]\label{conj:WHL}
There exists a positive contant $\delta$ such that $g(n)\ge
\frac{\delta n}{\log^2 n}$  for every even integer $n>2$.
\end{conj}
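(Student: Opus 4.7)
The plan is to follow the Hardy--Littlewood circle method applied to the binary Goldbach problem. I would rewrite a natural weighted version of the count,
\[
\sum_{\substack{p_1+p_2=n\\ p_1,p_2\text{ prime}}}(\log p_1)(\log p_2) = \int_0^1 S(\alpha)^{2}\, e(-\alpha n)\, d\alpha,
\]
where $S(\alpha)=\sum_{p\le n}(\log p)\, e(p\alpha)$, and partition $[0,1]$ into major arcs $\mathfrak{M}$ around rationals $a/q$ with $q$ small and minor arcs $\mathfrak{m}=[0,1]\setminus\mathfrak{M}$.

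On $\mathfrak{M}$ the Siegel--Walfisz theorem for primes in arithmetic progressions yields a main term of the form $\Ss(n)+O\!\bigl(n(\log n)^{-A}\bigr)$, with $\Ss(n)$ the quantity defined just before Conjecture~\ref{conj:WHL}. Since $\Ss(n)\asymp n/\log^{2}n$ uniformly for even $n\ge 4$ (the product over $p\mid n$, $p>2$ is at least $1$ and $n/\varphi(n)\gg 1$), the major-arc contribution is already of the conjectured order of magnitude, and partial summation to pass from the weighted count to $g(n)$ preserves this. What then remains is to bound the minor-arc contribution $\bigl|\int_{\mathfrak{m}} S(\alpha)^{2}\, e(-\alpha n)\, d\alpha\bigr|$ by strictly less than a fixed fraction of the main term.

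Combining Vinogradov's estimate $\sup_{\alpha\in\mathfrak{m}}|S(\alpha)|\ll n(\log n)^{-A}$ with Parseval's identity $\int_0^1|S(\alpha)|^{2}\, d\alpha\ll n\log n$ produces an on-average saving of arbitrarily many logarithms, which is exactly what drives the classical result that Goldbach holds for almost all even $n$ (Chudakov, van der Corput, Estermann). The hard part --- and precisely the reason the conjecture is still open --- is that this bound does not rule out a pointwise cancellation of the main term by a large negative minor-arc contribution for some individual $n$. To obtain a uniform positive lower bound one would need either a genuinely stronger exponential-sum estimate over primes on the minor arcs, or a sieve-theoretic, parity-breaking argument bypassing the circle method altogether. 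Neither is available with current technology, which is why WHL appears in the sequel as a hypothesis rather than as a theorem, and why even weakening it further, as done in Theorem~\ref{thm:bettersiegel}, is a nontrivial step.
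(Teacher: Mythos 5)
This is a conjecture, not a theorem: the paper does not prove WHL, it assumes it (and then weakens it further to WHLE) as a hypothesis from which the conditional bound on $1-\beta$ is derived. You have correctly recognized this. Your text is not a proof but rather an accurate diagnosis of why no proof exists: the circle method produces a main term of order $\Ss(n)\asymp n/\log^2 n$ from the major arcs, and Parseval plus Vinogradov's bound on $\sup_{\mathfrak m}|S(\alpha)|$ give an $L^1$ minor-arc saving that suffices for almost-all results (Chudakov, van der Corput, Estermann) but not for a pointwise lower bound at a single $n$, because a large negative minor-arc contribution at that $n$ cannot currently be excluded. That is precisely why the binary Goldbach problem --- and hence WHL --- is open, and why the paper treats it as an input hypothesis rather than as something to be established. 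Your answer is correct as an assessment of the state of the art; just be aware that no ``proof'' was expected here, and that the paper's actual contribution is the implication WHLE $\Rightarrow$ improved Siegel bound (Theorem~\ref{thm:bettersiegel}), not the truth of WHL itself.
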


Considering the size of $\Ss(n)$ we could even expect a $\delta>1.32$,
but in this weakened form of the Hardy--Littlewood conjecture, we assume 
only the existence of some small
positive $\delta$.

We now state Fei's theorem.
\begin{thm}[\cite{Fei}] \label{thm:Fei}
If the WHL-conjecture is true and if there is
an exceptional zero $\beta$ 
for a character $\chi$ with a prime modulus $q\equiv 3\mod 4$, then there
exists a positive constant $c$ such that  $1-\beta\ge \frac{c}{\log^2
q}$.
\end{thm}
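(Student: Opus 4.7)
The plan is to use the circle method to derive an asymptotic expansion of the Goldbach correlation $r(n):=\sum_{m_1+m_2=n}\Lambda(m_1)\Lambda(m_2)$ in which the contribution of the exceptional zero $\beta$ is made explicit, and then to force this contribution to remain small by invoking the WHL lower bound pointwise. The existence of $\beta$ close to $1$ perturbs $r(n)$ through the anomalous term $-x^\beta/\beta$ in the explicit formula for $\psi(x,\chi)$, and the Goldbach lower bound from WHL constrains how large this perturbation can be.

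The first step is the major-minor arc dissection of $r(n)=\int_0^1 S(\alpha)^2 e(-n\alpha)\,d\alpha$ with $S(\alpha)=\sum_{m\le n}\Lambda(m)e(m\alpha)$. On the major arc near $a/q$ with $(a,q)=1$, one has
\[
S(a/q+\eta)=\frac{\mu(q)}{\phi(q)}\,T(\eta)\;-\;\frac{\chi(a)\,\tau(\chi)}{\phi(q)\,\beta}\,T_\beta(\eta)\;+\;E(\eta),
\]
where $T(\eta)=\sum_{m\le n}e(m\eta)$, $T_\beta(\eta)=\sum_{m\le n}m^{\beta-1}e(m\eta)$, and $E$ collects contributions from non-exceptional characters (bounded via the classical zero-free region) and from secondary zeros of $L(s,\chi)$. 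Squaring, integrating against $e(-n\alpha)$, and summing over residues $a$, one arrives, after using the Jacobi-sum identity
\[
\sum_{t\bmod q}\chi(t)\chi(1-t)\;=\;-\chi(-1)\;=\;1
\]
(valid precisely because $q$ is prime with $q\equiv 3\pmod 4$, so $\chi$ is the odd Legendre symbol), at an asymptotic of the form
\[
r(n)\;=\;\Ss(n)\,n\;-\;c_0\,\chi(n)\,(\log q)^{k}\,\frac{n^\beta}{q}\;+\;O(\text{error}),
\]
with $c_0>0$ absolute and $k\in\{1,2\}$ arising from the integrals of $T_\beta$ against the Fej\'er-type kernel on the major arc combined with the Jacobi sum.

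Next one chooses an even integer $n\asymp q^{A}$ with $\chi(n)=1$ and $(n,q)=1$; such $n$ exist since the quadratic residues form a subgroup of index $2$ in $(\Z/q\Z)^\times$ and meet the even integers with positive density. For this $n$ the Siegel correction reduces $r(n)$. Applying WHL in the form $r(n)\ge\delta\,n\,(1+o(1))$ (valid after accounting for prime powers) produces
\[
c_0\,\frac{n^{\beta}(\log q)^{k}}{q}\;\le\;(\Ss(n)-\delta)\,n\;+\;O(\text{error}).
\]
Using $\Ss(n)=O(\log\log n)$ and $n=q^{A}$, this rearranges to $n^{\beta-1}(\log q)^{k}\ll q$, whence $(1-\beta)\,A\log q\gg k\log\log q-\log q+O(1)$. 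A careful choice of $A$ together with the value $k=2$ yields the stated bound $1-\beta\gg 1/\log^{2}q$.

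The main technical obstacle is pinning down $k=2$ in the Siegel correction: too few log factors would weaken the conclusion, and the target $\log^{2}q$ in the denominator comes exactly from squaring the logarithmic dependence expected to appear once in the integral of $T_\beta$ against the smooth kernel on the major arc and once from the Jacobi-sum combination. This must be accompanied by uniform control of the minor-arc contribution via a Vaughan-type identity and of the secondary zeros of $L(s,\chi')$ for non-exceptional $\chi'\bmod q$. The restriction to prime $q\equiv 3\pmod 4$ enters essentially through the value of the Jacobi sum and cannot be weakened without altering that identity.
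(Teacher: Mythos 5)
Your strategy is different from the paper's and, as written, it does not close. The paper avoids the circle method entirely: it considers the aggregate quantity
\[
S=\sum_{k=1}^q\Big(\sum_{2<p\le x}\e\Big(\tfrac{kp}{q}\Big)\Big)^2
= q\sum_{\substack{n\le 2x\\ q\mid n}}\sum_{\substack{p_1+p_2=n}}1,
\]
i.e.\ Parseval over the complete residue system mod~$q$, which directly isolates the Goldbach counts of \emph{multiples of $q$}. Lower-bounding this by WHL(E) and upper-bounding it by substituting the prime number theorem in arithmetic progressions (with the explicit $-\chi(a)\ph(q)^{-1}\int_2^x u^{\beta-1}/\log u\,du$ Siegel term), the cross terms vanish and one ends up with
\[
x^{2\beta-2}\le 1-\tfrac{\delta}{8}\tfrac{\ph(q)}{q}+\text{(error)},
\]
so that $1-\beta\gg \ph(q)/(q\log x)$. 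The $\log^2 q$ in the final bound comes from being forced to take $\log x\asymp\log^2 q$ so that the unconditional error $x\exp(-\tilde c\sqrt{\log x})$ from the PNT in AP is negligible. This mechanism is entirely absent from your argument.

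Your pointwise circle-method route has three concrete problems. First, a pointwise asymptotic for $r(n)=\int_0^1 S(\alpha)^2e(-n\alpha)\,d\alpha$ is not available because the minor arcs cannot be controlled for a \emph{single} $n$; only $L^2$ averages over $n$ are accessible, which is precisely why Fei and the present paper aggregate over multiples of $q$ instead of taking $n$ one at a time. Second, the asserted Siegel correction $-c_0\chi(n)(\log q)^k n^\beta/q$ has the wrong shape: the exceptional term in such expansions is of size comparable to a (twisted) singular series times $n^\beta/\beta$, with no spurious $1/q$ damping and no $(\log q)^k$ amplification; a factor $\tau(\chi)^2/\ph(q)^2=\chi(-1)q/\ph(q)^2\asymp 1/q$ does appear in the squared Gau\ss-sum coefficient, but it is cancelled after summing over residues $a$ and does not survive in the form you wrote. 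Third, and fatally, even accepting your formula, the inequality $n^{\beta-1}(\log q)^k\ll q$ with $n\asymp q^A$ gives $(1-\beta)A\log q\gg k\log\log q-\log q+O(1)$, whose right-hand side is \emph{negative} for large $q$; no choice of $A$ or $k$ turns this into a positive lower bound for $1-\beta$, let alone $\gg\log^{-2}q$. The role you assign to the Jacobi sum is also misplaced: what the restriction $q\equiv 3\ (\md 4)$ actually buys is $\chi(-1)=-1$, which makes the Siegel contribution to $S$ come with the favourable sign via $\tau(\chi)^2=\chi(-1)q$, and this is used in the aggregated identity, not through $\sum_t\chi(t)\chi(1-t)$.
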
 

Here, the corresponding region for the exceptional zero $\beta$ 
is meant to be that of Theorem~\ref{thm:page} with $T=q$. We will keep to
this convention for the rest of this section.

One would like to know if it is possible  to include other moduli in the above result or to relax the assumed WHL-conjecture.

Here we generalise Fei's result in these two aspects
and obtain a conditional improvement of
Siegel's bound (Theorem~\ref{thm:Siegel35}) for certain exceptional
characters  which includes Fei's modulus
condition (Corollary~\ref{cor:cor1}).
Our result still assumes the weak Hardy--Littlewood conjecture but allows certain exceptions (WHLE) making it  weaker than the WHL.
Our proof is similar to that of Fei's but
 exploits, apart from the use of the WHLE,
the generalisation to suitable composite moduli $q$.

\begin{conj}[WHLE]\label{Exc} 
Suppose that $x$ is sufficiently large, and $q\leq x/4$. Then
we have, with  at most 
$x/8q$  exceptions,
\[g(n)
\gg\frac { n}{\log^{2}n}\] for  
the multiples $n$ of $q$ in the interval $x/2<n\leq x$.
\end{conj}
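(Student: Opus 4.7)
The ``statement'' to be proved is phrased as a conjecture (WHLE), not a theorem, and it is used as a hypothesis in Theorem~\ref{thm:bettersiegel} rather than proved in the paper. So any ``plan'' is necessarily hypothetical and indicates what one would need to do to upgrade it to an unconditional statement; the present Goldbach technology does not get us there.

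The natural starting point is the classical exceptional-set theorem for the binary Goldbach problem of Montgomery--Vaughan, which says that the set $E(X)$ of even $n\leq X$ violating $g(n)\sim \Ss(n)$ satisfies $|E(X)|\ll X^{1-\delta}$ for some absolute $\delta>0$. Restricting to the dyadic window $x/2<n\leq x$, any such $n$ not in $E(x)$ automatically satisfies $g(n)\gg n/\log^{2}n$ since $\Ss(n)\asymp n/\log^{2}n$. Hence it suffices to show that the number of multiples of $q$ lying in $E(x)\cap(x/2,x]$ is at most $x/(8q)$.

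For small moduli, say $q\leq x^{\delta/2}$, the trivial intersection bound $|E(x)|\ll x^{1-\delta}$ already beats $x/(8q)$, so nothing extra is needed. For the difficult range $x^{\delta/2}<q\leq x/4$, I would run the circle method for
\[
g_q(n) := \sum_{\substack{n=p_1+p_2\\ n\equiv 0\,(\md q)}}1
\]
restricted to $n\in(x/2,x]$ and $n\equiv 0\,(\md q)$. The major-arc contribution should, on averaging over such $n$, reproduce $\Ss(n)$, whose size one controls by the factor $\frac{n}{\varphi(n)}\prod_{p\mid n,p>2}\frac{p-1}{p-2}$; the task is to bound the $L^{2}$-mean of the minor-arc error over this sparse progression. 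The right tool is a Bombieri--Vinogradov-type estimate for correlations $\sum_{n\equiv 0(q)}\Lambda(m)\Lambda(n-m)$, combined with a large sieve keeping track of the additional congruence condition.

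The main obstacle is precisely uniformity in $q$ up to $x/4$. Current unconditional exceptional-set theorems for Goldbach in progressions typically give the required savings only for $q$ up to a small power of $\log x$, or, under GRH-type hypotheses, up to $q\leq x^{\theta}$ for some $\theta<1/2$. Reaching $q$ as large as $x/4$ would require essentially solving the binary Goldbach problem for individual multiples of $q$ once $q$ is near its maximal allowed size, since then there are only $O(1)$ multiples of $q$ in the dyadic window and the exception budget $x/(8q)$ drops below one. This is well beyond present methods, which is why the statement is recorded as a conjecture and deployed as an assumption in what follows.
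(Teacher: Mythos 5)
You have correctly recognized that WHLE is stated as a conjecture and used only as a hypothesis; the paper provides no proof of it, so there is nothing in the paper to compare your argument against. Your diagnosis of the difficulty also matches what the authors themselves say in their closing \emph{Questions} subsection: they report being unable to derive WHLE from existing results, and note that even averaging over moduli $q\leq Q$ only recovers a special case of a still-open conjecture from \cite{KH}. Your observation about the critical range $q$ near $x/4$ (where the dyadic window $(x/2,x]$ contains only $O(1)$ multiples of $q$ and the exception budget $x/8q$ drops below $1$, so one is effectively asserting binary Goldbach for an individual $n$) is a concrete and accurate way to see why the statement cannot currently be proved; that is precisely the regime needed in Step~3 of Theorem~\ref{thm:bettersiegel}, where $x$ is chosen with $\log x \asymp \log^{2} q$, so $q$ is enormous compared to any power-of-$\log$ range reachable by Bombieri--Vinogradov type inputs. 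Your sketch for small $q$ via the Montgomery--Vaughan exceptional set is fine but, as you note, irrelevant to the range that actually matters here. In short: the proposal is an appropriate response to being asked to ``prove'' a conjecture --- it explains why no proof exists rather than fabricating one.
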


\begin{thm}\label{thm:bettersiegel}
Assume the WHLE Conjecture to be true. 
Let $q$ be a sufficiently large integer and 
$\chi$ be a primitive
character mod $q$ with $\chi(-1)=-1$ such that there is an
exceptional zero $\beta$ of $L(s,\chi)$.  Then there exists an effective
constant $c>0$  such that $1-\beta \geq\frac{c\ph(q)}{q\log^2(q)}$.
\end{thm}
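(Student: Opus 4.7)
The plan is to adapt Fei's strategy \cite{Fei} by comparing two estimates of the weighted Goldbach sum
\[
S(x) := \sum_{\substack{x/2 < n \leq x \\ q \mid n}} R(n), \qquad R(n) := \sum_{a+b=n}\Lambda(a)\Lambda(b),
\]
where $x$ is a parameter to be chosen as a suitable power of $q$. The aim is to show that supposing $1-\beta < c\,\varphi(q)/(q\log^2 q)$ would make the explicit-formula expansion of $S(x)$ too small to be compatible with the lower bound provided by Conjecture~\ref{Exc}.

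The first estimate is a lower bound from WHLE. Of the $\sim x/(2q)$ multiples of $q$ in $(x/2,x]$, all but at most $x/(8q)$ satisfy $g(n)\gg n/\log^2 n$. Since most Goldbach representations $n=p_1+p_2$ have both primes at least $\sqrt n$, one has $R(n)\gg g(n)\log^2 n \gg n$ on each non-exceptional $n$, and summing gives $S(x)\gg x^2/q$; the WHLE exceptions contribute only $o(x^2/q)$ via the trivial bound $R(n)\ll n\log^2 n$. The second estimate is a character expansion: after discarding the negligible contribution from pairs with $\gcd(ab,q)>1$, orthogonality of Dirichlet characters modulo $q$ gives
\[
\varphi(q)\,S(x) \;\sim\; \sum_{\psi\bmod q}\bar\psi(-1)\sum_{\substack{x/2 < a+b \leq x \\ (ab,q)=1}}\psi(a)\Lambda(a)\,\bar\psi(b)\Lambda(b).
\]
The principal character contributes $\sim 3x^2/8$ by the prime number theorem. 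For the exceptional real odd $\chi$, inserting $\sum_{n\le y}\chi(n)\Lambda(n) = -y^\beta/\beta+\text{(lower order)}$ from the explicit formula and evaluating the convolution by the Beta integral $B(\beta,\beta+1)$ yields a term $\sim -3x^{2\beta}/8$ (the overall sign from $\chi(-1)=-1$). For the $\varphi(q)-2$ remaining characters, the classical Landau--Page zero-free region of width $c/\log q$ controls the total contribution by $O\!\left(x^{2-c/\log q}(\log qx)^{O(1)}\right)$.

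Putting everything together,
\[
\frac{3(x^2 - x^{2\beta})}{8\varphi(q)} + O\!\left(x^{2-c/\log q}(\log qx)^{O(1)}\right) \;\gg\; \frac{x^2}{q}.
\]
Taking $x = \exp(C\log^2 q)$ with $C$ sufficiently large absorbs the error into $x^2/q$ and gives $\log x \asymp \log^2 q$; the inequality then reduces to $1 - x^{-2(1-\beta)} \gg \varphi(q)/q$, and the elementary estimate $1-e^{-u}\le u$ yields $(1-\beta)\log x \gg \varphi(q)/q$, hence $1-\beta \gg \varphi(q)/(q\log^2 q)$.

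The main obstacle is the simultaneous calibration of $x$ and the control of the $\varphi(q)-2$ non-exceptional character sums. In Fei's prime-$q\equiv 3\bmod 4$ setting the Jacobi symbol is essentially the only non-principal real character, while for general composite $q$ the contributions from many complex characters must all be absorbed by the main term; it is precisely this requirement that forces $\log x \asymp \log^2 q$, producing the $\log^2 q$ denominator in the final bound. A secondary technical step is verifying that restriction to $(ab,q)=1$ together with the passage from $g(n)$ to $R(n)$ on the WHLE side introduces only lower-order losses, which follows from standard sieve and Chebyshev-type estimates.
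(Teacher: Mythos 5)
Your proposal is essentially correct in its overall architecture and, interestingly, it follows the \emph{alternative} route that the paper itself mentions only in passing (the ``Comment'' after \eqref{explicit}) rather than the route actually carried out. The published proof works with the exponential sum $S=\sum_{k=1}^q\bigl(\sum_{2<p\le x}\e(kp/q)\bigr)^2$, which detects the congruence $p_1+p_2\equiv 0\ (\md q)$ by additive characters; it then feeds in the prime number theorem in arithmetic progressions (with the Siegel-zero term included in the main part) and evaluates everything with Ramanujan sums and Gau\ss\ sums, arriving at the clean identity $S=\frac{q}{\ph(q)}\li^2(x)+\frac{q}{\ph(q)}\chi(-1)\frac{x^{2\beta}}{\beta^2\log^2 x}+\Eexpl$. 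You instead detect the congruence by multiplicative orthogonality, split off the principal and exceptional characters, evaluate the exceptional convolution by a Beta-type integral to get the $\sim -\tfrac{3}{8}x^{2\beta}$ term (the sign coming from $\chi(-1)=-1$, exactly as in the paper), and control the remaining $\ph(q)-2$ characters by the Landau--Page zero-free region. The final calibration, $\log x\asymp\log^2 q$ and $1-e^{-u}\le u$, mirrors Step 3 of the paper and produces the same $\ph(q)/(q\log^2 q)$ bound with effective constants.

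A few things worth flagging. First, your lower-bound step and the paper's both implicitly restrict the WHLE conjecture to \emph{even} multiples of $q$ (otherwise $g(n)$ is essentially zero on odd $n$); your phrasing ``of the $\sim x/(2q)$ multiples'' is a little loose there, as the usable count is closer to $x/(4q)$ when $q$ is odd, which is the figure the paper actually uses. Second, your control of the $\ph(q)-2$ non-exceptional characters deserves one more sentence: each contributes $O(x^{2-c/\log q}\log^{O(1)}qx)$ to the character sum, so after dividing by $\ph(q)$ the total error is $O(x^{2-c/\log q}\log^{O(1)}qx)$, and the requirement that this be $o(x^2/q)$ is precisely what forces $\log x\gg\log^2 q$ — you say this, but it is the crux, so it should not be left as a one-liner. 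By comparison the paper sidesteps this bookkeeping entirely: it absorbs all non-exceptional characters at once into the single $\Oh(x\exp(-\tilde{c}\sqrt{\log x}))$ error of the prime number theorem in progressions, at the cost of the Gau\ss-sum computations. Third, the passage from $R(n)=\sum_{a+b=n}\Lambda(a)\Lambda(b)$ back to $g(n)$ (and the removal of prime powers and of pairs with $\gcd(ab,q)>1$) is standard but should be stated with the same care the paper gives to its $\Oh(\omega(q))$ correction. None of these affect the validity of the argument; your route and the paper's genuinely converge on the same inequality $x^{2\beta-2}\le 1-c\,\ph(q)/q$, and both yield effective constants.
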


\begin{proof}[Proof of Theorem~\ref{thm:bettersiegel}]

\underline{Step 1.}
 We prove the following \underline {lower bound} for the sum
\begin{equation}\label{boundS} S=\sum_{k=1}^q \Big(\sum_{2<p\le x}
\e\Big(\frac{kp}{q}\Big)\Big)^2\ge \frac{\delta x^2}{8\log^2 x}
\end{equation} for any sufficiently large real $x>2$ and some small $\delta>0$. 

For this, we first note note that
\[
\begin{aligned} S&=\sum_{k=1}^{q} \sum_{2<p_1, p_2\le x}
\e\Big(\frac{k(p_1+p_2)}{q}\Big) 
\\ &=\sum_{n\le 2x}\sum_{k=1}^{q}
\e\Big(\frac{kn}{q}\Big)\sum_{\substack{2<p_1, p_2\le x\\ p_1+p_2=n}}
1=\sum_{\substack{n\le 2x\\ n\equiv 0(q)}}q \sum_{\substack{2<p_1,
p_2\le x\\p_1+p_2=n}} 1.\\
\end{aligned}
\] 

Let $x$ be large enough with   
\begin{equation}\label{eq:qbound}
   q\le x/4. 
\end{equation} 
Hence under the assumption of  Conjecture~\ref{Exc},
for all even $n$ in the interval $x/2<n\leq x$
that are divisible by $q$, we have
\[
   \sum_{\substack{2<p_1,
p_2\le x\\p_1+p_2=n}} 1\geq\delta \frac{x}{\log^{2}x}
\]
for some constant $\delta>0$, with the possible exception of at most
$x/8q$
 such $n$.
Let $\setE$ be the set of these exceptions. 

Keeping this in mind, we obtain the lower bound
\begin{multline}\label{boundsum} 
S\ge q\sum_{\substack{x/2<n\le
x\\2|n\\n\equiv 0(q)}}\ \sum_{p_1+p_2=n}1\ge q \sum_{\substack {x/2<n\le
x\\2|n\\ n\equiv 0(q)\\ n\notin \setE}} \delta \frac{x}{\log^{2}x} \\
\geq \frac{q\delta  x}{\log^{2}x}\Big(
\frac{x}{4q} -\frac{x}{8q}\Big) = \frac{\delta x^{2}}{8\log^{2}x}
\end{multline}
as was to be shown.

\bigskip
\underline{Step 2.} We now prove a more \underline {explicit expression}
for the sum $S$ in the first step.

In \eqref{boundS}, the sum over $p$ is subdivided into parts
 depending on whether or not $q$ is divisible by $p$, i.e.\

\begin{multline*}
 S_1=\sum_{2<p\le x}
\e\Big(\frac{kp}{q}\Big)=\sum_{\substack{2<p\le x\\p\nmid q}}
\e\Big(\frac{kp}{q}\Big)+\sum_{\substack{2<p\le x\\p\mid q}}
 \e\Big(\frac{kp}{q}\Big)
=\sum_{\substack{2<p\le x\\(p,q)=1}} \e\Big(\frac{kp}{q}\Big)+\Oh(\omega(q))\\
=\sum_{\substack{1\le a\le q\\(a,q)=1}}
\e\Big(\frac{ka}{q}\Big)\sum_{\substack{2<p\le x\\p\equiv a(q)}}
1+\Oh(\omega(q))
=c_q(k)\sum_{\substack{2<p\le x\\p\equiv a(q)}} 1 + \Oh(\omega(q)),
\end{multline*}
where $c_q(k)$ denotes the above Ramanujan sum 
$$
\sum _{\substack{1\leq a\leq q \\ (a,q)=1}}e({\tfrac {ka}{q}})
$$ 
while  $\omega(q)=\#\{p\mid q\}$.
Using the prime number theorem  in arithmetic progressions ~\cite{MV},
the last sum over $p$ is written out as 
\begin{equation}
\label{eq:pntap}
\sum_{\substack{2<p\leq x \\ p\equiv a(q)}} 1=
\frac{\li(x)}{\ph (q)}-\frac{\chi(a)}{\ph (q)}
\int_2^x\frac{u^{\beta-1}}{\log u}du +\Oh(x\exp(-\tilde{c}\sqrt{\log
x}))
\end{equation}
for some constant $\tilde{c}>0$, which holds
uniformly in 
\begin{equation}
\label{eq:zwqbound}
q\leq \exp(C\sqrt{\log x})
\end{equation}
for any $C>0$ (this range
is consistent with \eqref{eq:qbound}, though we could have chosen a larger $x$).
Hence, this gives 
\begin{equation}
\label{MT}
 S_1=M+c_q(k)\frac{\li(x)}{\ph(q)} +\Oh\Big(
qx\exp(-\tilde{c}\sqrt{\log x})\Big)
\end{equation}
with 
term $M$ being
\[ M=\frac{-1}{\ph(q)}\sum_{a=1}^q
\e\Big(\frac{ak}{q}\Big)\chi(a)\int_2^x\frac{u^{\beta-1}}{\log u}du
=\frac{-\tau_k(\chi)}{\ph(q)}\int_2^x\frac{u^{\beta-1}}{\log u}du,
\]
since $\chi(a)=0$ if $(a,q)>1$, with the Gau\ss\ sum 
\[
\tau_k(\chi)=\sum_{a=1}^q \e\Big(\frac{ak}{q}\Big)\chi(a).
\]
Inserting the expansion $\int_{2}^{x}\frac{u^{\beta-1}}{\log u}
du=\frac{x^{\beta}}{\beta \log x}+\Oh(\frac{x^{\beta}}{\log^{2}x})$
yields
\[
M=\frac{-\tau_k(\chi)}{\ph(q)}\cdot\frac{x^\beta}{\beta \log x} +
\Oh\Big(\frac{q^{1/2}}{\ph(q)}\cdot\frac{x^\beta}{\log^2x}\Big)
\]
from the estimate  $\tau_{k}(\chi)\ll q^{1/2}$.

We substitute this expression in \eqref{MT}
and the resulting approximation for $S_1$ into 
$S$ to get
\begin{multline}
\label{gauss}
  S=\sum_{k=1}^q S_1^2  = \sum_{k=1}^q \Big(
  c_q(k)\frac{\li(x)}{\ph(q)}
  - \frac{\tau_k(\chi)}{\ph(q)}\cdot\frac{x^\beta}{\beta \log x}  \\
\hfill + \Oh\Big(qx\exp(-\tilde{c}\sqrt{\log x}) 
+ \frac{q^{1/2}}{\ph(q)}\cdot\frac{x^\beta}{\log^2x}\Big)  \Big)^2 \\
=\sum_{k=1}^q \Big(  c^2_q(k)\frac{\li^2(x)}{\ph^2(q)}
+\frac{\tau^2_k(\chi)}{\ph^2(q)}\cdot\frac{x^{2\beta}}{\beta^2 \log^2 x} \Big)\\
+ \Oh\Big(\frac{q^{1/2}x^\beta}{\ph(q)\log x}q^2x\exp(-\tilde{c}\sqrt{\log x})
+\frac{q^2x^{2\beta}}{\beta\ph^2(q)\log^3x} 
+q^3x^2\exp(-2\tilde{c}\sqrt{\log x})\Big),
\end{multline}
where all the mixed terms containing the Ramanujan sum have now disappeared
since
\begin{multline*}
   \sum_{k=1}^q  c_q(k)\tau_k(\chi)=\sum_{k=1}^q 
   \sum_{\substack{1\leq a\leq q\\(a,q)=1}}\sum_{\substack{1\leq b\leq q\\(b,q)=1}}
    \e\Big(\frac{ak}{q}\Big)\e\Big(\frac{bk}{q}\Big)\chi(b) \\
    = \sum_{\substack{1\leq a\leq q\\(a,q)=1}}
    \sum_{\substack{1\leq b\leq q\\(b,q)=1\\b\equiv -a(q)}} \chi(b) q =
    q\sum_{\substack{1\leq a\leq q\\(a,q)=1}} \chi(-a) =0
\end{multline*}
and $\sum_{k=1}^q c_q(k)=0$. The $\Oh$-term in \eqref{gauss} simplifies to
\begin{equation}\label{eexpl}
  E_{expl}=\Oh\Big(\frac{x^{2}}{\log^3x}\frac{q^2}{\ph(q)^2} 
+q^3x^2\exp(-\tilde{c}\sqrt{\log x})\Big).
\end{equation}

For the main term in \eqref{gauss}, we
use properties of Gau\ss\ sums (\cite[p.287]{MV}),
\[
\tau_k(\chi)=\Big\{
\begin{aligned} \bar{\chi}(k)\tau_1(\chi),&\ (k,q)=1\\ 0,&\
\text{ else},\\
\end{aligned}
\]
so that the sum over $\tau_k^2(\chi)$ in \eqref{gauss} becomes
\[
\frac{1}{\ph^2(q)} \sum_{\substack{k=1\\ (k,q)=1}}
^{q-1}\tau_1^2(\chi)\bar{\chi}^{2} (k)=\frac{q}{\ph(q)}\chi(-1)
\]
since $\tau^{2}_{1}(\chi)=\chi(-1)q$ and $\chi^{2}=\chi_{0}$.
Similarly, we have
\[ \sum_{k=1}^q c_q^2(k)=q\ph(q),
\]
see \cite[p.113]{MV}. Hence
\begin{equation}\label{explicit} S= \frac{q}{\ph(q)}\li^2(x)
  +\frac{q}{\ph(q)}\chi(-1)\frac{x^{2\beta}}{\beta^2\log^2 x} +E_{expl}.
\end{equation}

\begin{comm} We note that an alternative approach
via the identity
\[
\frac{q}{\ph(q)} \sum_{\chi(q)} \chi(-1) |\psi(x,\chi)|^2
= \sum_{k=1}^q \Big(\sum_{p\leq x} \Lambda(p) \e\Big(\frac{kp}{q}\Big) \Big)^2
+o(\log^3 x)
\]
and the use of an explicit formula for $\psi(x,\chi)$
might  avoid the use of Gau\ss\ sums in Step 2 of the proof.
\end{comm}

\underline{Step 3.} 
Now we compare the lower bound from Step 1 with the explicit
evaluation from Step 2. With the
assumption $\chi(-1)=-1$ we get the inequality
\[
\frac{x^{2\beta}}{\beta^2\log^2 x}\le
\Big(1-\frac{\delta}{8}\cdot\frac{\ph(q)}{q}\Big)\frac{x^2}{\log^2 x}
+ \Eexpl,
\]
where $\Eexpl$ is the error term of \eqref{explicit} above. 
This yields
\[
x^{2\beta-2}\le \Big(1-\frac{\delta}{8}\cdot\frac{\ph(q)}{q}\Big)
+\Oh\Big(\frac{q}{\ph(q)\log x} 
+ q^3\exp(-\tilde{c}\sqrt{\log x})\Big).
\]

We may now choose $x$  such that 
$(\frac{4\log q}{\tilde{c}} )^2\leq \log x\leq c_{3}\log^{2}q$ for some
$c_{3}>(4/\tilde{c})^{2}$, so that $x$ is
not too large 
compared to $q$, but still such that the choice 
is admissible with our previous assumptions \eqref{eq:qbound}
and \eqref{eq:zwqbound}.
Hence with $1/\log x\leq \tilde{c}^{2}/16\log^{2}q$,
we obtain
\[
x^{2\beta-2}\le \Big(1-\frac{\delta}{8}\cdot\frac{\ph(q)}{q}\Big)
+\frac {c_{1}}{\log^2 q}\frac{q}{\ph(q)} 
\]
for some positive constant $c_{1}$,
since the expression $q/\ph(q)\log x$ dominates the error term due to
\[ 
   q^{3}\exp(-\tilde{c}\sqrt{\log x}) \leq
   q^{3}\exp\Big(-\tilde{c}\frac{4}{\tilde{c}}\log q\Big)
   =q^{3} \exp(-4\log q)= q^{-1}.
\] 
Assume now that $q$ is large enough so that
$\frac{16c_{1}}{\log^2q}\le \delta\frac{\ph^2(q)}{q^2}$. This
means that  
\[x^{2\beta-2}\le
\Big(1-\frac{\delta}{8}\cdot\frac{\ph(q)}{q}\Big)
+\frac {\delta}{16}\frac{\ph(q)}{q} \le 1-\frac
{\delta}{16}\frac{\ph(q)}{q}\]  for $\delta<8$.
This gives the inequality of
Theorem~\ref{thm:bettersiegel}, since
\[
\beta-1\le \frac{\log (1-\frac {\delta}{16}\cdot\frac{\ph(q)}{q})}{2\log x}
\leq 
\frac{-c\ph(q)}{q\log^2 q} \text{ with }c=\frac{\delta}{32c_{3}}>0,
\]
where we use the upper bound for $\log x$, which is 
$\log x \leq c_{3} \log^{2}q$.
\end{proof}

We emphasize that  all constants
in the above proof  are {\em effectively computable} since
the constant $\tilde{c}$ in \eqref{eq:pntap} coming from the prime number theorem in
progressions is itself so and all other constants in the proof
 can be chosen effectively 
depending on $\tilde{c}$.

The next corollary gives a criterion for a composite modulus
$q$ to satisfy Theorem~\ref{thm:bettersiegel}.

\begin{cor} 
\label{cor:cor1}
Assume the WHLE Conjecture
and that $q$ is a sufficiently large integer 
with $\#\{t\mid q\}\cap(\{4\}\cup\{p\equiv
   3 (\md 4);\ p \text{ prime}\})=1$. If there is an
   exceptional zero $\beta$ for a character mod $q$, then
   $1-\beta\ge\frac{c\ph(q)}{q\log^{2}q}$ for some (effective) constant $c>0$.
\end{cor}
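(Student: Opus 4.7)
The plan is to derive the corollary by applying Theorem~\ref{thm:bettersiegel} to the primitive real character $\tilde\chi$ of conductor $\tilde q \mid q$ that carries the exceptional zero $\beta$, and then to translate the resulting estimate from $\tilde q$ back to $q$.

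First, by Theorem~\ref{thm:page} (Landau--Page) the exceptional zero $\beta$ is the real zero of $L(s,\tilde\chi)$ for a unique primitive real character $\tilde\chi$ of some conductor $\tilde q$. Since an induced character shares its non-trivial zeros with its primitive inducer, the hypothesis that $\beta$ is associated with an $L$-function of a character mod $q$ forces $\tilde q \mid q$.

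The key step is to verify that the divisor hypothesis on $q$ implies $\tilde\chi(-1)=-1$. A primitive real character factors, via the Chinese remainder theorem, as $\tilde\chi=\prod_p \tilde\chi_{p^{a(p)}}$ over the prime-power factors of $\tilde q$, with admissible local components being $\tilde\chi_p$ for an odd prime $p$, along with $\tilde\chi_4$ or one of the two characters $\tilde\chi_8^{\pm}$. One computes $\tilde\chi_p(-1)=-1$ iff $p\equiv 3\pmod 4$, and $\tilde\chi_4(-1)=-1$; the parity of $\tilde\chi$ is thus governed by the local factors of $\tilde q$ lying in $\{4\}\cup\{p\ \text{prime},\ p\equiv 3\pmod 4\}$. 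Since $\tilde q\mid q$ and $q$ has exactly one divisor in this set by hypothesis, a short case analysis identifies the admissible form of $\tilde q$: if $4\mid q$, then $\tilde q=4m$ with $m$ a squarefree product of primes $\equiv 1\pmod 4$ dividing $q$ and $\tilde\chi(-1)=(-1)\cdot(+1)=-1$; if $4\nmid q$ and the unique bad divisor is an odd prime $p^{\star}\equiv 3\pmod 4$, then $\tilde q=p^{\star}r$ with $r$ a squarefree product of primes $\equiv 1\pmod 4$ dividing $q$, again yielding $\tilde\chi(-1)=-1$. Fei's case $q$ prime with $q\equiv 3\pmod 4$ is recovered by $p^{\star}=q$, $r=1$.

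With $\tilde\chi(-1)=-1$ in hand, Theorem~\ref{thm:bettersiegel} applied to $\tilde\chi$ of modulus $\tilde q$ gives $1-\beta\geq c\,\ph(\tilde q)/(\tilde q\log^2\tilde q)$ for an effective constant $c>0$. Finally, because every prime dividing $\tilde q$ also divides $q$, one has $\ph(\tilde q)/\tilde q=\prod_{p\mid\tilde q}(1-1/p)\geq\prod_{p\mid q}(1-1/p)=\ph(q)/q$, and $\log\tilde q\leq\log q$; multiplying these two inequalities produces $\ph(\tilde q)/(\tilde q\log^2\tilde q)\geq \ph(q)/(q\log^2 q)$ and hence the bound $1-\beta\geq c\,\ph(q)/(q\log^2 q)$ claimed in the corollary. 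The principal obstacle is the parity verification above: running the case analysis carefully to confirm that the arithmetic hypothesis on $q$ really forces the exceptional $\tilde\chi$ to be odd. Once this parity is secured, the invocation of Theorem~\ref{thm:bettersiegel} and the descent from $\tilde q$ to $q$ are routine.
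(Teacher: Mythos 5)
Your overall strategy matches the paper's: identify the exceptional character as a real primitive one by Landau--Page, pin down its parity from the divisibility hypothesis on $q$, and invoke Theorem~\ref{thm:bettersiegel}. The useful thing you add that the paper glosses over is the explicit descent from the conductor $\tilde q$ to $q$ via $\ph(\tilde q)/(\tilde q\log^2\tilde q)\geq\ph(q)/(q\log^2 q)$, which is correct and closes a small logical step the paper leaves implicit.

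However, the parity verification --- which you yourself flag as the principal obstacle --- is not actually established, and this is a genuine gap. You write ``if $4\mid q$, then $\tilde q=4m$ with $m$ a squarefree product of primes $\equiv 1\pmod 4$'' (and analogously in the odd case), but nothing forces the bad factor of $q$ to divide the conductor $\tilde q$ of the exceptional character. For instance, with $q=15$ the hypothesis $\#\{t\mid q\}\cap(\{4\}\cup\{p\equiv 3\ ({\rm mod}\ 4)\})=1$ holds, yet the primitive real character of conductor $5$ is an admissible $\tilde\chi$ with $\tilde\chi(-1)=+1$, to which Theorem~\ref{thm:bettersiegel} does not apply. More generally, whenever $\tilde q$ is a squarefree product of primes $\equiv 1\pmod 4$ (or $\tilde q=1$), the character is even and the whole argument breaks down. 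The paper's own proof is terse here and asserts ``necessarily $\chi(-1)=-1$''; it appears to implicitly read ``a character mod $q$'' as ``a primitive character of conductor exactly $q$,'' in which case (for $q$ squarefree or $q=4m$, excluding $8\mid q$) the parity is indeed forced. Your proof instead allows $\tilde q\mid q$ to be a proper divisor, which is a more general reading --- but then the parity is not forced and your case analysis supplies the needed conclusion by assertion rather than by argument. To make this rigorous you would either need to justify why $\tilde q$ must contain the unique bad factor of $q$, or restrict to the case $\tilde q=q$, or else restate the conclusion as applying only when the exceptional character is odd.
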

\begin{proof}
 For the moduli $q$ in question
there is a single real primitive character such
that $\chi(-1)=-1$. This is certainly true if
$q\in \mathcal{S}:=\{4\}\cup\{p\equiv 3 (\md 4)\}$,
and  $q=tm$ with $t\in \mathcal{S}\cup\{2\}$ and with $m$ having
only prime divisors $p\equiv 1 (\md 4)$. Then there is only a single real
$\chi$ with $\chi(-1)=-1$, namely the one that is induced by that mod
$t$. For the others, $\chi(-1)=1$, since this equation
holds for every prime modulus $p\equiv 1 (\md 4)$.

 Hence, if we assume that $\beta$ exists for an exceptional character
 $\chi$, we know that $\chi$ is real and primitive, and necessarily
 $\chi(-1)=-1$. Then Theorem~\ref{thm:bettersiegel} applies to give
 the assertion.
\end{proof}

We recover Fei's Theorem from Corollary~\ref{cor:cor1}
when $q=p\equiv 3 (\md 4)$.

\medskip
Under GRH except for a possible $\beta>3/4$ and the WHLE Conjecture 
we can deduce the conditional bound
\[
   L(1,\chi)\gg 1-\beta \gg \frac{\ph(q)}{q\log^{2}q}.
\] 
using Theorem~\ref{thm:FI} of Friedlander and Iwaniec, and using
Theorem~\ref{thm:bettersiegel}, supposing $\chi(-1)=-1$ for the
exceptional character $\chi$ mod $q$.
In fact, with Goldfeld's asymptotic formula \eqref{eq:Goldf} 
we can relax the assumption of GRH with one exception 
to obtain

\begin{cor}\label{cor2}
Let the WHLE Conjecture be true. Assuming $L(1,\chi)=o(\log^{-1} q)$, 
we have $L(1,\chi)\gg R_{q}\ph(q)q^{-1}\log^{-2}q$ for an exceptional character 
$\chi$ mod $q$ with $\chi(-1)=-1$.
Here $R_{q}=\sum_{(a,b,c)}a^{-1}$ with the sum going over all reduced 
quadratic forms $(a,b,c)$ of discriminant $-q$.
\end{cor}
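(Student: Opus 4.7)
The plan is to combine Theorem~\ref{thm:bettersiegel} (which gives a lower bound on $1-\beta$ under the WHLE) with Goldfeld's asymptotic formula \eqref{eq:Goldf} (which relates $1-\beta$ to $L(1,\chi)$), using the hypothesis $L(1,\chi)=o(\log^{-1}q)$ to guarantee that both results apply simultaneously.

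First I would unpack the standing hypothesis. As recalled in the paper, whenever $L(1,\chi)\le c/\log q$ for a suitably small $c>0$, one has $1-\beta\le 1/\log q$, and in particular an exceptional zero $\beta$ exists. The assumption $L(1,\chi)=o(\log^{-1}q)$ therefore guarantees the existence of such a $\beta$ and, for $q$ large enough, places it in the range $1-\beta<c/\log q$ where Goldfeld's formula \eqref{eq:Goldf} is valid. Thus we may write
\[
   1-\beta = \Big(\tfrac{6}{\pi^{2}}+o(1)\Big)\,\frac{L(1,\chi)}{R_{q}},
\]
with $R_{q}=\sum a^{-1}$ over reduced quadratic forms of discriminant $-q$.

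Next I would apply Theorem~\ref{thm:bettersiegel}. Since $\chi$ is the (real, primitive) exceptional character mod $q$ with $\chi(-1)=-1$, and we assume the WHLE Conjecture, the theorem gives an effective lower bound
\[
   1-\beta \ge \frac{c\,\ph(q)}{q\log^{2}q}.
\]
Substituting Goldfeld's asymptotic into this inequality and solving for $L(1,\chi)$ yields
\[
   L(1,\chi) \;\gg\; R_{q}(1-\beta) \;\gg\; \frac{R_{q}\,\ph(q)}{q\log^{2}q},
\]
which is exactly the claim.

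The only subtlety, and the main thing to check carefully, is that the hypothesis $L(1,\chi)=o(\log^{-1}q)$ simultaneously triggers (i) the existence of $\beta$, (ii) the validity of Goldfeld's asymptotic \eqref{eq:Goldf} (which requires $1-\beta<c/\log q$), and (iii) the applicability of Theorem~\ref{thm:bettersiegel} (which only requires $q$ sufficiently large, $\chi$ primitive with $\chi(-1)=-1$, and an exceptional zero). All three conditions are met for $q$ large, so no additional hypotheses are needed beyond those already displayed in the corollary. The argument is essentially a one-line chain once the preparatory results are in place.
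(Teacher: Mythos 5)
Your argument is correct and is essentially the same as the paper's: combine Theorem~\ref{thm:bettersiegel} with Goldfeld's asymptotic \eqref{eq:Goldf}, using the hypothesis $L(1,\chi)=o(\log^{-1}q)$ to ensure $1-\beta<c/\log q$ so that the asymptotic applies. Your write-up simply makes explicit the verification of the applicability conditions that the paper leaves implicit.
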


A reduced quadratic form of discriminant $-q$ is an integer 
triple $(a,b,c)$ with 
$b^{2}-4ac=-q$ and $-a<b\leq a < \frac{1}{4}\sqrt{q}$, see 
\cite[p.~624]{Gol}.

\begin{proof}
By \eqref{eq:Goldf} from \cite[p.~624]{Gol}
and Theorem~\ref{thm:bettersiegel}, we have
\[
   L(1,\chi)\sim \frac{\pi^{2}}{6}\Big(\sum_{(a,b,c)} a^{-1}\Big)(1-\beta) 
\gg \frac{R_{q}\ph(q)}{q\log^{2}q}.
\]
\end{proof}

Note that for a prime modulus $q=p\equiv 3(\md 4)$, we have
$R_{q}=\sum_{(a,b,c)}a^{-1} \geq 1$ since then there is a reduced quadratic form
$(1,1,c)$ with $a=1$. Then our bound states $L(1,\chi)\gg \log^{-2}q$
under the assumptions of Corollary~\ref{cor2}.

\subsection{Questions}
We would like to know if the case
$\chi(-1)=1$ could be handled as well. 
But we have not been able to combine the results of \cite{BHMS}
together with Fei's approach  which seems to be a natural way
to proceed in this direction.

One could also ask if the WHLE Conjecture itself can be obtained from existing results.
We were not able to find anything appropriate.
Even when averaging the assertion over moduli $q\leq Q$, we only reach
a special case of Conjecture 1 from \cite{KH},
which seems to be out of reach.

%
%

\end{document}